\definecolor{kugray5}{RGB}{224,224,224}
\numberwithin{equation}{section}
\numberwithin{subsection}{section}
\newtheorem*{namedtheorem}{\theoremname}
\newcommand{\theoremname}{testing}
\newtheorem{theorem}[subsection]{Theorem}
\newtheorem{proposition}[subsection]{Proposition}
\newtheorem{proposition-definition}[subsection]
{Proposition-Definition}
\newtheorem{corollary}[subsection]{Corollary}
\newtheorem{lemma}[subsection]{Lemma}
\theoremstyle{definition}
\newtheorem{remark}[subsection]{Remark}
\theoremstyle{remark}
\newcommand\cB{\mathcal{B}}
\newcommand\cF{\mathcal{F}}
\newcommand\cM{\mathcal{M}}
\newcommand\cO{\mathcal{O}}
\newcommand\cP{\mathcal{P}}
\newcommand\cU{\mathcal{U}}
\newcommand\CC{\mathbb{C}}
\newcommand\DD{\mathbb{D}}
\newcommand\LL{\mathbb{L}}
\newcommand\PP{\mathbb{P}}
\newcommand\QQ{\mathbb{Q}}
\newcommand\RR{\mathbb{R}}
\newcommand\ZZ{\mathbb{Z}}
\newcommand\fE{\mathfrak{E}}
\newcommand\frg{\mathfrak{g}}
\newcommand\re{{\rm Re}}
\newcommand\Pic{{\rm Pic}}
\newcommand\Proj{{\rm Proj}}
\newcommand{\uu}{$\ddot{\hbox{u}}$}
\newcommand{\q}{/\!\!/}
\theoremstyle{plain}
\theoremstyle{definition}
\newtheorem{defn}{Definition}
\begin{document}

\title{Moduli space of quasi-polarized K3 surfaces of degree 6 and 8}
\author{Zhiyuan Li,~~Zhiyu Tian}

\address{Fudan University, Shanghai Center For Mathematical Science,
Shanghai, 200433, China}
\email{zhiyuan\_li@fudan.edu.cn}

\address{Peking University, Beijing International Center for Mathematical Research, Beijing, 100871, China}

\email{zhiyutian@bicmr.pku.edu.cn}

\begin{abstract}
In this paper, we study the moduli space of quasi-polarized complex K3 surfaces of degree $6$ and $8$ via geometric invariant theory. The general members in such moduli spaces are complete intersections in projective spaces and we have natural GIT constructions for the corresponding moduli spaces and we show that the K3 surfaces with at worst ADE singularities are GIT stable. We give a concrete description of boundary of the compactification of the degree $6$ case via the Hilbert-Mumford criterion. We compute the Picard group via Noether-Lefschetz theory and discuss the connection to the Looijenga's compactifications from arithmetic perspective. One of the main ingredients is the study of the projective models of K3 surfaces in terms of Noether-Lefschetz divisors.
\end{abstract}

\maketitle

\section{Introduction} A primitively quasi-polarized K3 surface $(S,L)$ of degree $2\ell$  over $\CC$ consists of a complex K3 surfaces, a big and nef line bundle $L$ such that $c_1(L)\in H^2(S,\ZZ)$ is a primitive class and $L^2=2\ell$. Let $\cF_{2\ell}$ be the moduli space of primitively quasi-polarized complex  $K3$ surfaces of degree $2\ell$. It is well-known that the period map behaves very well on $\cF_{2\ell}$. Namely, if we denote by $\DD$ the  period domain of K3 surfaces  and $\Gamma_{2\ell}$ the monodromy group, global Torelli theorem tells us  that  $\cF_{2\ell}$ is isomorphic to $\Gamma_{2\ell}\backslash \DD$ via the period map.

Besides the Hodge theoretical construction,  there are also explicit algebraic construction of  $\cF_{2\ell}$  via geometric invariant theory (GIT) for low degree K3 surfaces, where such a general K3 surface is a complete intersection in the projective space. For instance, the GIT construction of $\cF_2$ and $\cF_4$ has been worked out by Shah (cf.~\cite{Sh80}\cite{Sh81}). 
When $2\ell=6$ or $8$, a general element $(S,L)\in \cF_{2\ell}$ is a complete intersection of a smooth quadric and a cubic in $\PP^4$ or a complete intersection of three quadrics in $\PP^5$ respectively.  

In this paper, we describe the GIT construction of moduli space of these complete intersects and characterize the image of period map for  such complete intersections as a complement of certain Noether-Lefschetz (NL) divisors in $\Gamma_{2\ell}\backslash\DD$. The latter one has a natural arithmetic compactification constructed by  Looijenga (cf.~\cite{Lo03}), and we will compare this arithmetic compactification with the natural GIT compactification.

More precisely, for any non-negative integers $d,g$, the NL-divisor $D_{d,g}^{2\ell}\subset \cF_{2\ell}\cong \Gamma_{2\ell}\backslash \DD$ is defined to be the locus of  quasi-polarized complex K3 surfaces $(S,L)\in \cF_{2\ell}$ such that $\Pic(S)$ contains a rank two primitive sublattice of the following form:
\begin{equation}\label{eq1.1} \left.\begin{array}{c|c|c}~ & L & \beta \\\hline L & 2\ell & d \\\hline \beta & d & 2g-2\end{array}\right. \end{equation}
for some $\beta\in \Pic(S)$.  For simplicity of notations, we identify $D^{2\ell}_{d,g}$ as divisors on $\Gamma_{2\ell}\backslash \DD$ via period map. One of our main result is:
\begin{theorem}\label{thm1}For $\ell=3,4$, 
the complete intersections in $\PP^{\ell+1}$ of degree $2\ell$ with at worst simple singularities (i.e. isolated ADE singularities) are GIT-stable. Let $\cM_{2\ell}$ be the moduli space of such complete intersections with at worst simple singularities.  Then the period map extends to $\cM_{2\ell}$ and  its image in $\Gamma_{2\ell}\backslash \DD$ is the complement of $D^{2\ell}_{1,1}, D^{2\ell}_{2,1}$ and $D^{2\ell}_{3,1}$.

Furthermore, the natural GIT compactification  $\overline{\cM}_{2\ell}$ is not isomorphic to Looijenga's compactification of the complement $ \Gamma_{2\ell}\backslash\DD-\bigcup\limits_{d=1}^3 D^{2\ell}_{d,1}$.
\end{theorem}
\begin{remark}We refer the readers to 
\cite{BL15} for the analysis of GIT stability  for such complete intersections with semi log canonical singularities. 
\end{remark}

Secondly, we have classified the boundary components of $\cM_6$ in its GIT compactification $\overline{\cM}_{6}$. The main result is  as follows:
\begin{theorem}\label{thmboundary}
The boundary $\overline{\cM}_6\backslash \cM_6$ consists of $9$ irreducible components whose general member $X$ is described as follows:
 \begin{enumerate}
     \item [$\alpha)$] $(\mathbf{Semitable})$    $X$ has two corank $3$ singularities, but not a union of a quadric surface and a quadric cone with multiplicity two.  
      \item  [$\widetilde{\beta})$] $(\mathbf{Stable})$   
        $X$ is a union of a smooth quadric surface and a smooth complete intersection of two quadrics.
      \item    [$\gamma)$]   $(\mathbf{Semistable})$  $X$ has two simple elliptic singularities of type $\tilde{E}_8$, whose projective tangent cone intersect $X$ along lines, and not  the union of three quadric cones. 
      
      \item  [$\delta)$] $(\mathbf{Stable})$  $X$  has an isolated $\widetilde{E}_7$ singularity. 
       \item [$\epsilon)$]  $(\mathbf{Stable})$   $X$ has an isolated $\widetilde{E}_8$ singularity, whose projective tangent cone meets $X$ at a point.
      
      \item   [$\zeta)$] $(\mathbf{Stable})$   $X$ is  singular along a line. 
      \item  [$\eta)$]  $(\mathbf{Stable})$   $X$ is singular along a conic. 
      \item  [$\theta)$]  $(\mathbf{Stable})$   $X$ is singular along a twisted cubic. 
      \item  [$\phi)$] $(\mathbf{Stable})$  $X$ is singular along a rational normal curve of degree $4$.
 \end{enumerate}
The stratum $\alpha$ is $6$-dimensional,  $\widetilde{\beta}$ is $7$-dimensional (it contains a $2$-dimensional semistable loci $\beta$),    $\gamma$ and $\phi$  are $2$-dimensional,   $\delta$ and $\zeta$ are $11$-dimensional, $\epsilon$ is $8$-dimensional, $\eta$ is $7$-dimensional, $\theta$ is $3$-dimensional.
\end{theorem}
According to the work of \cite{St95}, the Baily-Borel compactification  $(\Gamma_6\backslash \DD)^{\ast}$ of the Shimura variety $\Gamma_6\backslash \DD$ consists of $10$ irreducible components. The extended period map induces a birational map 
\begin{equation}\label{ep}
\overline{\cM_6}\dashrightarrow (\Gamma_6\backslash \DD)^{\ast}.    
\end{equation}
 According to the spirit of Hassett-Keel-Looijenga program raised by \cite{La09}, it is expected that the map \eqref{ep} can factor through a sequence of elementary birational transformations of Shimura type, i.e. the exceptional loci comes from Shimura subvarieties. This problem will be solved in a forthcoming paper \cite{GLRST}.  

In \cite{MP12}, Maulik and Pandharipande  have conjectured that the Picard group of $\cF_{2\ell}$ with $\QQ$-coefficients is spanned by the NL-divisors $\{D_{d,g}^{2\ell}\}$ on $\cF_{2\ell}$. This conjecture has been verified in \cite{BLMM17} via automorphic representation theory and finding a geometric approach remains highly interesting.  Here,  using the main theorem, we can  compute the Picard group of $\cF_{6}$ and $\cF_8$ from the GIT construction. 
\begin{corollary}\label{nlthm}
When $2\ell=6$ or $8$, the Picard group $\Pic_\QQ(\cF_{2\ell})$ with rational coefficients is spanned by NL-divisors $D^{2\ell}_{d, 1}, d=1,2,3,4$. Moreover,  
$$\dim_\QQ \Pic_\QQ(\cF_{2\ell})=\dim H^2(\Gamma_{2\ell},\QQ)=4$$ 
for $2\ell=6$ or $8$.
\end{corollary}
The first part of this result has been also obtained by K. O'grady in \cite{O86} using a slightly different method. We just point out that the similar  approach  has been applied to  K3 surfaces with Mukai models (i.e. $10 \leq 2\ell\leq 18$, or $2\ell=22$) in \cite{GLT15}.

\subsection*{Acknowledgement}We are grateful to O'Grady and Laza for many useful comments. The first author is  supported by NSFC grants for  General Program (11771086), Key Program (11731004) and the Shu Guang Program (17SG01) of Shanghai Education Commission. The second author is partially supported by the program``Recruitment of global experts", and NSFC grants No. 11871155, No. 11831013, No.11890662.
 
\section{Noether-Lefschetz divisors for K3 surfaces} 
Let us recall the Noether-Lefschetz theory on K3 surfaces. 
\subsection{Noether-Lefschetz divisors} Let $(S,L)$ be a primitively quasi polarized K3 surface of degree $2\ell$. The middle cohomology $\Lambda:=H^2(S,\ZZ)$ is a unimodular even lattice of signature $(3,19)$ under the intersection form $\left<,\right>$.  Let $h_{2\ell}=c_1(L)$, then the orthogonal complement 
$\Lambda_{2\ell}:=h_{2\ell}^{\perp}\subset \Lambda$ is an even lattice of signature  $(2,19)$, which has a unique representation:
\begin{equation}
\Lambda_{2\ell}=\ZZ\omega \oplus U^{\oplus 2}\oplus E_8(-1)^{\oplus 2},
\end{equation}
where $\left<\omega,\omega\right>=-2\ell$, $U$ is the hyperbolic plane and $E_8(-1)$ is the unimodular, negative definite even lattice of rank $8$.

Let $\Lambda_{2\ell}^\CC=\Lambda_{2\ell}\otimes \CC$. The period domain $\DD$ associated to $\Lambda_{2\ell}$ can be  realized as a connected component of 
$$\DD^{\pm}:=\{v\in \PP(\Lambda_{2\ell}^\CC)| \left<v,v\right>=0,-\left<v, \bar{v}\right> > 0\}.$$
The monodromy group 
$$\Gamma_{2\ell}=\{g\in \text{Aut}(\Lambda_{2\ell})^{+}|~g~\hbox{acts trivially on}~\Lambda^\vee_{2\ell}/\Lambda_{2\ell} \},$$
naturally acts on $\DD,$ where $\text{Aut}(\Lambda_{2\ell})^{+}$ is the identity component of $\text{Aut}(\Lambda_{2\ell})$.
According to the Global Torelli theorem of K3 surfaces, there is an isomorphism $$\cF_{2\ell}\cong \Gamma_{2\ell}\backslash\DD$$ via the period map. Then $\cF_{2\ell}$ is a locally Hermitian symmetric variety with only quotient singularities, and hence $\QQ$-factorial.

The NL-divisor $D_{d,g}^{2\ell}$ can be identified as the quotient of the  union of  subdomains on $\DD$ as following: 
\begin{equation}\label{eq2.1} D_{d,g}^{2\ell}\cong \Gamma_{2\ell}\backslash \bigcup\limits_{M\subset \Lambda} \{z\in \DD~| \left<z,m\right>=0,\forall m\in M\}, \end{equation}
where $M$ runs for all rank two primitive sublattice of $\Lambda_{2\ell}$ of  the form \eqref{eq1.1}. In the language of Heegner divisors, the right hand side of \eqref{eq2.1}  is called  the arithmetic quotient of {\it hyperplane arrangement} in $\DD$. As known in \cite{O86} Proposition 1.3, we have the irreducibility theorem:
\begin{theorem}
All the NL-divisors $D_{d,g}^{2\ell}\in\Pic_\QQ(\cF_{2\ell})$ are irreducible.
\end{theorem}

\begin{remark}
The definition of NL-divisors we used here is slightly different from the one in \cite{MP12}. Maulik and Pandharipande define the NL-divisors  without the assumption of primitivity of the sublattice $M$ in (\ref{eq2.1}). But the span of these divisors are the same as ours (cf.~\cite[\S 0.2]{MP12} ). 
\end{remark}

\subsection{Dimension formula}
Let us denote by $\Pic_\QQ(\Gamma_{2\ell}\backslash\DD)^{NL}$ the subgroup of $\Pic_\QQ(\Gamma_{2\ell}\backslash\DD)$ generated by NL-divisors with $\QQ$-coefficients. By \cite{Br02,MP12}, we know that the dimension $\rho_{2\ell}$ of the span of Heegner divisors on $\Gamma_{2\ell}\backslash \DD$ can be explicitly computed by the following formula:
 \begin{equation}
 \label{eq:1.2}
 \begin{aligned}
 \rho_{2\ell}&=\frac{31 }{24}l+\frac{55}{24}-\frac{1}{6\sqrt{6l}} \re (e^{\frac{5\pi i}{12}}(G(-1,4l)+G(3,4l)))\\&-\frac{1}{4\sqrt{2\ell}}\re (G(-1,2\ell))-\sum\limits_{k=0}^{l}\{\frac{k^2}{4l}\}-\sharp\{k~|~\frac{k^2}{4l}\in\ZZ, 0\leq k\leq l \} 
 \end{aligned}
 \end{equation}
where $\{,\}$ denotes the fraction part  and  $G(a,b)$ is the generalized quadratic Gauss sum: $$G(a,b)=\sum\limits_{k=0}^{b-1} e^{2\pi i \frac{ak^2}{b}}.$$
Denote by $d_{Eis}=\sharp\{k~|~\frac{k^2}{4l}\in\ZZ, 0\leq k\leq l \} $. After applying the summation formula proved by Gauss in 1811 (cf.~\cite[\S 2.2]{BE81} ), one can simply get
\begin{lemma}
\begin{equation}\label{eq1.4}\rho_{2\ell}= \frac{31l+55}{24}-\frac{1}{4}\alpha_l-\frac{1}{6}\beta_l -  \sum\limits_{k=0}^{l}\{\frac{k^2}{4l}\}-d_{Eis},
\end{equation}
where 
$$\alpha_l= \begin{cases}
      \left( \frac{2\ell}{2\ell-1}\right), & \text{ $l$ is even }; \\
     0& \text{otherwise}.
\end{cases}, ~~~~\beta_l= \begin{cases}
     \left( \frac{l}{4l-1}\right)-1, & \text{ if $3| l$}; \\
   \left( \frac{l}{4l-1}\right)+\left( \frac{l}{3}\right)   & \text{otherwise}.
\end{cases}$$ and $\left( \frac{a}{b}\right)$ is the Jacobi symbol.
\end{lemma}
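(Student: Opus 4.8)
The plan is to reduce formula \eqref{eq:1.2} to the stated closed form by evaluating the three Gauss sums $G(-1,2l)$, $G(-1,4l)$ and $G(3,4l)$ explicitly. The only analytic input is the generalized quadratic reciprocity for Gauss sums proved by Gauss (cf.~\cite{BE81} $\S$2.2): writing $S(a,b,c)=\sum_{n=0}^{|c|-1}e^{\pi i(an^2+bn)/c}$, one has, whenever $ac\neq 0$ and $ac+b$ is even,
\[
S(a,b,c)=\Bigl|\frac{c}{a}\Bigr|^{1/2}e^{\pi i(|ac|-b^2)/(4ac)}\,S(-c,-b,a).
\]
Since each $G(m,N)=S(2m,0,N)$, this trades a sum of length $N$ (growing with $l$) for one of bounded length, after which the computation is elementary.

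First I would record the three reductions. Applying the reciprocity with $(a,b,c)=(-2,0,2l)$ gives $G(-1,2l)=\sqrt{l}\,e^{-\pi i/4}\sum_{n=0}^{1}e^{\pi i l n^2}=\sqrt l\,e^{-\pi i/4}(1+(-1)^l)$; with $(a,b,c)=(-2,0,4l)$ one gets $G(-1,4l)=2\sqrt{2l}\,e^{-\pi i/4}$; and with $(a,b,c)=(6,0,4l)$ one gets $G(3,4l)=\sqrt{2l/3}\,e^{\pi i/4}\sum_{n=0}^{5}e^{-2\pi i l n^2/3}$. The residual short sums depend only on $l\bmod 2$ and $l\bmod 3$: the first is $2$ or $0$ according as $l$ is even or odd, and the second equals $6$, $-2\sqrt3\,i$, $2\sqrt3\,i$ according as $l\equiv 0,1,2\pmod 3$ (using the residue pattern of $n^2\bmod 3$).

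Next I would substitute into \eqref{eq:1.2} and collect phases. For the term $\tfrac{1}{4\sqrt{2l}}\re(G(-1,2l))$ one has $\re(G(-1,2l))=\sqrt{2l}$ for $l$ even and $0$ for $l$ odd, so this contribution is exactly $\tfrac14$ or $0$, matching $\tfrac14\alpha_l$. For the remaining term I combine the fixed twist $e^{5\pi i/12}$ with the phases above: $e^{5\pi i/12}G(-1,4l)$ has real part $\sqrt{6l}$, while $e^{5\pi i/12}G(3,4l)$ has real part $-\sqrt{6l}$, $\sqrt{6l}$, $-\sqrt{6l}$ in the three residue classes $l\equiv0,1,2\pmod3$. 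Hence $\tfrac{1}{6\sqrt{6l}}\re(e^{5\pi i/12}(G(-1,4l)+G(3,4l)))$ equals $0,\tfrac13,0$, i.e.\ $\tfrac16\beta_l$ with $\beta_l=0,2,0$ respectively.

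Finally I would match these numerical outputs to the Jacobi-symbol formulas, which is purely elementary. Modulo $2l-1$ one has $2l\equiv 1$, so $\left(\frac{2l}{2l-1}\right)=1$, reconciling $\alpha_l$ with the parity computation. Modulo $4l-1$ one has $4l\equiv 1$, hence $l\equiv(2^{-1})^2$ is a square and $\left(\frac{l}{4l-1}\right)=1$ always, while $\left(\frac{l}{3}\right)$ records $l\bmod3$; thus $\left(\frac{l}{4l-1}\right)-1$ and $\left(\frac{l}{4l-1}\right)+\left(\frac{l}{3}\right)$ reproduce exactly the values $0,2,0$ found for $\beta_l$. Assembling these pieces in \eqref{eq:1.2} yields the claimed identity. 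I expect the main obstacle to be bookkeeping rather than conceptual: one must apply the reciprocity correctly in the non-coprime cases (where $a$ and $c$ are both even) and track the $e^{\pm\pi i/4}$ and $e^{5\pi i/12}$ phases carefully when taking real parts, since a single sign error in a quadratic-residue pattern propagates to the wrong Jacobi symbol.
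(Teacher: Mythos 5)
Your proposal is correct and follows essentially the same route as the paper, which simply invokes Gauss's 1811 summation formula from \cite{BE81} \S 2.2 without supplying details; your reciprocity reductions of $G(-1,2l)$, $G(-1,4l)$ and $G(3,4l)$, the resulting case values $\alpha_l\in\{0,1\}$ and $\beta_l\in\{0,2\}$, and the identification $\left(\frac{l}{4l-1}\right)=\left(\frac{2l}{2l-1}\right)=1$ via $4l\equiv 1$ and $2l\equiv 1$ all check out. The only difference is that you make explicit the bookkeeping the paper leaves to the reader.
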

As shown in \cite{MP12}, the span of NL-divisors are the same as the span of non irreducible  divisors on $\Gamma_{2\ell}\backslash\DD$. 

\subsection{Projective models of K3 surfaces}
Let $(S,L)$ be a smooth K3 surface with a primitive quasi-polarization $L$ of degree $2\ell$. The linear system $|L|$ defines a map  $\psi_L$ from $S$ to $\PP^{l+1}$. The image of $\psi_L$ is called a {\it projective model} of $S$.

In \cite{Sa74}, Saint-Donat gives a precise description of all projective models of $(S,L)$ when $\psi_L$ is not a birational morphism. 
\begin{proposition}\label{prop:2.3}\cite{Sa74} Let $L$ be the primitive quasi-polarization of degree $2\ell$ on $S$ and let $\psi_L$ be the map defined by $|L|$. Then there are  following possibilities:
\begin{enumerate}
  \item $\psi_L$ is birational to a degree $2\ell$ surface in $\PP^{l+1}$. In particular, $\psi_L$ is a closed embedding when $L$ is ample. 
  \item  $\psi_L$ is a generically $2:1$ map and $\psi_L(S)$ is a smooth rational normal scroll of degree $l$,  or a cone over a rational normal curve of degree $l$.
  \item $|L|$ has  a fixed component $D$, which is a smooth rational curve. Moreover, $\psi_L(S)$ is a rational normal curve of degree $l+1$ in $\PP^{l+1}$.
\end{enumerate}
\end{proposition}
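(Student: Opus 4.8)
The plan is to follow Saint-Donat's analysis of the linear system $|L|$, organized around two successive dichotomies: whether $|L|$ has a fixed component, and, when it does not, whether the resulting morphism is birational or generically $2:1$ onto its image.

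First I would pin down the numerics. Since $L$ is nef with $L^2 = 2l > 0$, Kawamata--Viehweg vanishing (or its surface predecessor, Mumford--Ramanujam vanishing) applied with $K_S = 0$ gives $H^i(S,L) = 0$ for $i > 0$, so Riemann--Roch on $S$ yields $h^0(S,L) = \tfrac12 L^2 + 2 = l+2$. Hence $\psi_L$ is a rational map to $\PP^{l+1}$ whose image is non-degenerate. Writing $|L| = F + |M|$ with $F$ the fixed part and $|M|$ the moving part, the argument splits according to whether $F = 0$.

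Second, I would treat the case $F \neq 0$ to produce conclusion (3). The key is to use the Hodge index theorem together with $L^2 > 0$ and nefness to constrain the components of $F$: a short intersection-theoretic argument forces the existence of a genus-one pencil $|E|$ (an irreducible curve with $E^2 = 0$) and a smooth rational $(-2)$-curve $\Gamma$ with $E\cdot\Gamma = 1$ and $L = (l+1)E + \Gamma$, where $\Gamma$ is the entire fixed part. The moving system $|(l+1)E|$ is then pulled back from the elliptic fibration $f\colon S \to \PP^1$ via $|\cO_{\PP^1}(l+1)|$, so $\psi_L$ factors through $\PP^1$ and its image is the rational normal curve of degree $l+1$ in $\PP^{l+1}$. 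Verifying that the fixed curve is rational and that no further fixed component can occur is the delicate bookkeeping at this stage.

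Third, assuming $F = 0$, I would show that $|L|$ is base point free and then decide birational versus hyperelliptic. Base point freeness is Saint-Donat's core lemma: an isolated base point would produce, via the exact sequence attached to a general member $C \in |L|$ (of arithmetic genus $l+1$) together with Riemann--Roch on $C$, a pencil contradicting nefness or the index theorem. Once $\psi_L$ is a morphism, it is birational onto a surface of degree $L^2 = 2l$ unless $L$ is \emph{hyperelliptic}, i.e.\ unless $\psi_L$ is generically $2:1$; by Saint-Donat's criterion this occurs exactly when $L^2 = 2$, or there is a genus-one pencil $E$ with $E\cdot L = 2$, or $L = 2B$ with $B^2 = 2$. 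This last alternative is excluded by the primitivity of $L$, which is precisely where the hypothesis that $(S,L)$ be primitively quasi-polarized enters and removes the Veronese surface from the list. In the hyperelliptic case the image then has degree $\tfrac12 L^2 = l$ and spans $\PP^{l+1}$, so it is a non-degenerate irreducible surface of minimal degree (degree equal to codimension plus one); by the classical del Pezzo--Bertini classification of varieties of minimal degree it is a rational normal scroll, the Veronese surface, or a cone over a rational normal curve, and having excluded the Veronese we obtain a smooth rational normal scroll of degree $l$ or a cone over a rational normal curve of degree $l$, giving (2). I expect the main obstacle to be the base point freeness step and the precise hyperelliptic dichotomy, since both rest on Saint-Donat's careful curve-by-curve intersection analysis rather than a single clean vanishing statement, with the degenerate low-$l$ boundary cases (such as $L^2 = 2$) requiring extra care.
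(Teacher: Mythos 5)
The paper gives no proof of this proposition at all---it is quoted directly from Saint-Donat \cite{Sa74}---and your outline is a faithful reconstruction of Saint-Donat's original argument: Riemann--Roch giving $h^0(L)=l+2$, the fixed-component case forcing $L=(l+1)E+\Gamma$ with the map factoring through the elliptic pencil, base point freeness otherwise, the hyperelliptic criterion, and the del Pezzo--Bertini classification of minimal-degree surfaces with the Veronese excluded by primitivity. So your approach is essentially the same as the one the paper relies on by citation, and the numerics you record (degree $2l$ image in the birational case, degree $l$ image in the $2{:}1$ case, degree $l+1$ rational normal curve in the fixed-component case) all check out.
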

We call K3 surfaces of type $(1)$, $(2)$, $(3)$  {\it nonhyperelliptic}, {\it unigonal}, and {\it digonal} K3 surfaces accordingly.
When $l=2,3,4$, the projective model of a general quasi-polarized K3 surface $(S, L)$ is a complete intersection in the projective space $\PP^{l+1}$.
\begin{remark}
Assume that $\psi_L$ is a birational morphism. Then one can easily see that $L$ is not ample if and only if there exists an exceptional $(-2)$ curve $D\subseteq S$.  The morphism $\psi_L$ will  factor through a contraction $\pi:S\rightarrow \tilde{S}$ where $\tilde{S}$ is a singular K3 surface with simple singularities. 

 Recalling that the NL-divisor $D^{2\ell}_{0,0}$ parametrizes all K3 surfaces  $(S,L)$ of degree $2\ell$ with exceptional $(-2)$ curves. Therefore,  the projective model of a general member in $D^{2\ell}_{0,0}$ is a surface in $\PP^{l+1}$ of degree $2\ell$ with simple singularities.  
\end{remark}

In this paper, we mainly consider the case $2\ell=6$ and $8$, where the classification of projective models of $S$ can be read off from the Picard lattice of $S$.
\begin{lemma}\label{lem:1}
Let $(S,L)$ be a smooth quasi-polarized K3 surface of degree $2\ell$ ($2\ell=6$ or $8$). Then
\begin{enumerate}
 \item  $(S,L)\in D_{1,1}^{2\ell}$ if and only if $S$ is digonal except
 \begin{enumerate}[$(\ast)$]
 \item $L^2=8$ and $L=L'+E+C$, where $C$ is a rational curve, $E$ is an irreducible elliptic curve  and $L'$ is irreducible of genus two with $L'\cdot C=E\cdot C=1 $ and $L'\cdot E=2$. The image $\psi_L(S)$ is contained in a cone over cubic surface in $\PP^4$.
 \end{enumerate}
\item  $(S,L)\in D_{2,1}^{2\ell}$ if and only if  $S$ is unigonal.
\item $(S,L)\in D_{3,1}^{2\ell}$ if and only if $S$ is one of the following:

\begin{itemize}
\item when $l=3$, $S$ is birational  to the complete intersection of a singular quadric and a cubic in $\PP^4$ via $\psi_L$.
\item when $l=4$, $S$ is  either birational to a bidegree $(2,3)$ hypersurface of the Serge variety 
$\PP^1\times\PP^2\hookrightarrow \PP^5$ via $\psi_L$ or is in case $(\ast)$.
\end{itemize}
\end{enumerate}
\end{lemma}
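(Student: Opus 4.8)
The plan is to convert membership in $D^{2l}_{d,1}$ into the existence of an effective isotropic class of small $L$-degree, and then read the projective model off Saint-Donat's trichotomy (Proposition~\ref{prop:2.3}). By definition $(S,L)\in D^{2l}_{d,1}$ means $\Pic(S)$ contains a class $\beta$ with $\beta^2=0$ and $\beta\cdot L=d$; by Riemann--Roch $\beta$ or $-\beta$ is effective, and as $L$ is nef we take $\beta$ effective with $\beta\cdot L=d>0$. Let $d_0$ be the least positive value of $\beta\cdot L$ over isotropic classes. I would first show this minimum is attained by a genus-one pencil: an effective isotropic class of minimal degree may be assumed nef, since a $(-2)$-curve $\Gamma$ meeting it negatively either has $L\cdot\Gamma=0$, and may be reflected away while fixing both $L$ and $\beta\cdot L$, or would lower $\beta\cdot L$ and contradict minimality; and a primitive nef isotropic class on a K3 is a genus-one pencil $E$ with $E\cdot L=d_0$. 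By Saint-Donat \cite{Sa74} (see Proposition~\ref{prop:2.3}), $\psi_L$ fails to be birational exactly when $d_0\le 2$, with $d_0=1$ the digonal case and $d_0=2$ the unigonal case; the remaining $L^2=2$ hyperelliptic possibility is excluded here since $L^2=2l\ge 6$.

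Parts (1) and (2) then follow in both directions. If $(S,L)\in D^{2l}_{1,1}$ then $d_0=1$, so $S$ is digonal by Proposition~\ref{prop:2.3}(3); conversely a digonal $S$ carries the elliptic pencil with $E\cdot L=1$, hence lies in $D^{2l}_{1,1}$. Likewise $(S,L)\in D^{2l}_{2,1}\setminus D^{2l}_{1,1}$ forces $d_0=2$, so $S$ is unigonal by Proposition~\ref{prop:2.3}(2), and conversely. The one point to watch is that the degree-two pencil is primitive, i.e. does not split as $2E'$ with $E'\cdot L=1$; this is precisely what removing $D^{2l}_{1,1}$ guarantees.

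For (3) we have $d_0=3$, so by Proposition~\ref{prop:2.3}(1) $\psi_L$ is birational onto a degree-$2l$ surface in $\PP^{l+1}$, and the pencil $|E|$ with $E\cdot L=3$ maps its fibres to plane cubics, each spanning a $\PP^2$. I would analyze the surface through the quadrics containing it. For $l=3$ a dimension count gives a unique quadric $Q\supseteq\psi_L(S)$; since a cubic fibre lies in both a spanning plane $\Pi_t$ and $Q$, while a proper intersection $Q\cap\Pi_t$ would be a conic, each $\Pi_t$ lies in $Q$, so $Q$ contains a pencil of planes and is therefore singular, and $\psi_L(S)=Q\cap F$ is the complete intersection with a cubic $F$. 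For $l=4$ the same count yields a net of three quadrics; generically these are the $2\times 2$ minors of a $2\times 3$ matrix of linear forms and cut out the Segre threefold $\PP^1\times\PP^2\hookrightarrow\PP^5$, on which $S$ is the unique K3 class $2H_1+3H_2$ (forced by $K_S=0$ and the cubic fibres), i.e. bidegree $(2,3)$.

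The main obstacle, and the source of case $(\ast)$, is the degeneration of this picture for $l=4$, which I would treat as follows. The planes $\Pi_t$ share a common point precisely when the elliptic fibration has a section contracted by $\psi_L$, that is a $(-2)$-curve $C$ with $C\cdot E=1$ and $L\cdot C=0$; then $\psi_L(C)$ is a vertex, the net of quadrics degenerates, and $\psi_L(S)$ lies on a cone over a cubic scroll in $\PP^4$. Putting $L'=L-E-C$ recovers the genus-two class with exactly the intersection numbers $L'\cdot E=2$ and $L'\cdot C=1$ of $(\ast)$. To close the argument I would verify by a direct computation in the rank-three lattice $\langle L',E,C\rangle$ (Gram determinant $10$, signature $(1,2)$) that it carries no isotropic vector of $L$-degree $1$ or $2$; this confirms that $(\ast)$ genuinely sits in $D^{2l}_{3,1}\setminus(D^{2l}_{1,1}\cup D^{2l}_{2,1})$ and is correctly separated from the digonal and unigonal loci, which is the content of the exception flagged in (1). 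The delicate bookkeeping is to show these lattice configurations are exhaustive, i.e. that for $l=3,4$ no other contracted curve or splitting of the pencil can occur; here the smallness of $l$ and Saint-Donat's precise list are what make the finite check feasible.
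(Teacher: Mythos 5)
Your proposal is correct and follows essentially the same route as the paper: both reduce parts (1) and (2) to Saint-Donat's trichotomy (Proposition~\ref{prop:2.3}) via the minimal-degree elliptic pencil, and for part (3) with $l=3$ the paper's one explicit argument --- that the unique quadric through $\psi_L(S)$ must be singular because it contains the planes spanned by the cubic fibres --- is exactly yours. The only difference is one of exposition: where the paper cites \cite{Sa74} (Proposition 7.15 and Example 7.19) for the $l=4$ Segre/cone dichotomy and the lattice of case $(\ast)$, you reconstruct those details, and your verification that $\langle L',E,C\rangle$ carries no isotropic class of $L$-degree $1$ or $2$ checks out.
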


\begin{proof} 
The proof of (1) and (2) are straightforward from Proposition \ref{prop:2.3}. See also \cite[\S 2, \S 5 ]{Sa74} for more detailed discussion. 

Now we suppose that a quasi-polarized K3 surface $(S,L)\in D_{3,1}^6 $ is  neither unigonal nor diagonal. Then $\psi_L$ is a birational map to a complete intersection of a quadric and a cubic.  Our first statement of (3) comes from the fact any quadric threefold containing a plane cubic must be singular. If $(S,L)\in D_{3,1}^8$,  the assertion follows from \cite{Sa74} Proposition 7.15 and  Example 7.19. 
\end{proof}

\begin{remark}
We would like to refer the readers to \cite{JK04} and \cite{GLT15} for a detailed description of projective models of low degree ($2\ell\leq 22$) K3 surfaces. 
\end{remark}
\begin{section}{Complete intersection of a quadric and a cubic}
In this section, we construct the moduli space of  the complete intersection of a smooth quadric and a cubic in $\PP^4$ via geometric invariant theory. 

\subsection{Terminology and Notations} In the rest of this paper, we will use the following terminology. 
Let $f(u,v,w)$ be an analytic function in $\CC[[u,v,w]]$ whose leading term defines an isolated singularity at the origin. We have the following types of singularities:  
\noindent 
\begin{itemize}
  \item Simple singualrities: isolated $A_n$, $D_k$, $E_r$ singularities.
  \item Simple elliptic singularities $\tilde{E}_r$:
  \begin{itemize}
  \item $\tilde{E}_6$:  $f=u^3+v^3+w^3+auvw$, 
  \item $ \tilde{E}_7$: $f=u^2+v^4+w^4+auvw$,
   \item $ \tilde{E}_8$: $f=u^2+v^3+w^6+auvw$, 
  \end{itemize}
\end{itemize}

We will use the notation $l(x),q(x),c(x)$ as linear, quadratic and cubic polynomials of $x=(x_0,\ldots,x_n)$.
\subsection{Cubic sections on quadric threefolds}Let $Q$ be the smooth quadric threefold in $\PP^4$ defined by the equation $$x_0x_4+x_1x_3+x_2^2=0.$$
Since every nonsingular quadric hypersurface in $\PP^4$ is projectively equivalent to $Q$,  a complete intersection of a smooth quadric and a cubic can be identified with an element in $|\cO_Q(3)|$. 

The automorphism group of $Q$ is the reductive Lie group $SO(Q)(\CC)$ which is isomorphic to $SO(5)(\CC)$.  Then we can naturally describe the moduli space of the complete intersection of a smooth quadric and a cubic as the GIT quotient of the linear system $|\cO_{Q}(3)|=\PP(V)$, where $V$ is a $30$-dimensional vector space defined by the exact sequence
$$0\rightarrow H^0(\PP^4,\cO_{\PP^4}(1))\rightarrow H^0(\PP^4,\cO_{\PP^4}(3))\rightarrow V\rightarrow 0.$$
Let us take the set of monomials \begin{equation}\label{eq:3.1}\cB:=\{x_0^{a_0}x_1^{a_1}\ldots x_4^{a_4}|\sum\limits_{i=0}^4 a_i=3 \text{ and }  a_0a_4=0 \}.
\end{equation}
to be a basis of $V$.
Sometimes, we may change the basis for simpler computations.
\subsection{Numerical criterion} Now we classify stability of  the points in $\PP(V)$ under the action of $SO(Q)(\CC)$ by applying the Hilbert-Mumford numerical criterion \cite{MFK94}.

As is customary, a one parameter subgroup (1-PS) of $SO(Q)(\CC)$ can be diagonalized as $$ \lambda_{u,v}:t\in\CC^\ast\rightarrow \hbox{diag}(t^u,t^v,1,t^{-v},t^{-u}),$$  for some $u,v\in\ZZ$. We call such $\lambda_{u,v}:\CC^\ast \rightarrow SO(Q)(\CC)$ a {\it normalized} 1-PS of $SO(Q)(\CC)$ if $u\geq v\geq 0$.

Let $\lambda_{u,v}$ be a normalized 1-PS of $SO(Q)(\CC)$. Then the weight of a monomial $x_0^{a_0}x_1^{a_1}\ldots x_4^{a_4}\in \cB$ with respect to $\lambda_{u,v}$ is \begin{equation}(a_0-a_4)u+(a_1-a_3)v.\end{equation}
If we denote by $M_{\leq0}(\lambda_{u,v})$ (resp.~$M_{<0}(\lambda_{u,v})$) the set of monomials of degree $3$ which have non-positive (resp.~negative) weight with respect to $\lambda_{u,v}$, one can easily compute the maximal subsets $M_{\leq 0}(\lambda_{u,v})$ (resp. $M_{<0}(\lambda_{u,v})$ ), as listed in Table 1 (resp. Table 2) .

\begin{table}[ht] \label{tab:1}
\centering
\caption{Maximal subsets $M_{\leq 0}(\lambda)$}
  \begin{tabular}{|cl|c|c|c|}
    \hline
   & Cases &$(u,v)$& Maximal monomials  \\ \hline
    & (N1)&(1,0) & $x_1^{a_1}x_2^{a_2}x_3^{a_3},  \sum a_i=3$ \\ \hline
    &(N2)& (1,1) &$ x_0x_2x_3, x_2^3$ \\ \hline
     &(N3)&(2,1)& $ x_0x_3^2,x_1^2x_4, x_1x_2x_3, x_2^3$\\ \hline
  \end{tabular}
\end{table}

\begin{table}[ht]\label{tab:2}
\caption{Maximal subsets $M_{< 0}(\lambda)$}
\centering
  \begin{tabular}{ |c| c|c |}
    \hline
     Cases &$(u,v)$& Maximal monomials  \\ \hline
    (U1)& (1,1) &$ x_0x_3^2, x_2^2x_3$ \\ \hline
     (U2)& (3,1) &$ x_1^2x_4, x_1x_3^2, x_2^2x_3 $ \\ \hline
  \end{tabular}
\end{table}

According to the Hilbert-Mumford criterion,  an element $f(x_0,\ldots,x_4)\in \PP(V)$ is not properly stable (resp. unstable) if and only if  the weight of all monomial in $f$ is non-positive (resp. negative) for some $1$-PS.
Thus we obtain:
\begin{lemma}\label{lem:2}Let $X$ be the surface defined by an element in $\PP(V)$. Then $X$  is not properly stable  if and only if $X=Q\cap Y$ for some cubic  hypersurface $Y\subseteq \PP^4$ defined by a cubic polynomial in one of following cases:
\begin{itemize}
  \item $c(x_1,x_2,x_3,x_4)$;
  \item $x_0x_3l(x_2,x_3)+x_1x_2\ell_1(x_3,x_4)+x_1q(x_3,x_4)+c(x_2,x_3,x_4)$;
  \item $x_0x_3^2+x_1x_3l_1(x_2,x_3)+x_1x_4l_2(x_1,x_2,x_3,x_4)+c(x_2,x_3,x_4)$.
\end{itemize} 
\end{lemma}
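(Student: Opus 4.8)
The plan is to run the Hilbert--Mumford numerical criterion against a finite list of diagonal one-parameter subgroups and then read off the resulting normal forms. First I would recall that a point $f\in\PP(V)$ fails to be properly stable exactly when $\mu(f,\lambda)\le 0$ for some nontrivial $1$-PS $\lambda$ of $SO(Q)(\CC)$. Since $SO(Q)(\CC)\cong SO(5)(\CC)$ is reductive, every $1$-PS lies in a maximal torus and all maximal tori are conjugate; as conjugating $\lambda$ by $g\in SO(Q)(\CC)$ only replaces $f$ by $g^{-1}\cdot f$ while fixing $Q$ (hence leaving the surface class $X=Q\cap Y$ in the same orbit), I may assume $\lambda=\lambda_{u,v}=\mathrm{diag}(t^u,t^v,1,t^{-v},t^{-u})$, and since the Weyl group of $SO(5)$ acts on $(u,v)$ as signed permutations I may normalize to $u\ge v\ge 0$. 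With the weight of a monomial $x_0^{a_0}\cdots x_4^{a_4}\in\cB$ equal to $(a_0-a_4)u+(a_1-a_3)v$, one checks $\mu(f,\lambda_{u,v})=\max\{\,\mathrm{wt}(m):m\text{ occurs in }f\,\}$, so $\mu(f,\lambda_{u,v})\le 0$ if and only if every monomial of $f$ has non-positive weight, i.e. $f\in\mathrm{span}\,M_{\le 0}(\lambda_{u,v})$. The converse direction of the lemma is then immediate, since any $f$ in such a span is manifestly destabilized by the corresponding $\lambda_{u,v}$.

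The combinatorial core is to determine how $M_{\le 0}(\lambda_{u,v})$ varies over the cone $u\ge v\ge 0$ and to isolate the finitely many inclusion-maximal sets. Because the weight is piecewise linear in $(u,v)$, a monomial with $(a_0-a_4,a_1-a_3)=(p,q)$ changes sign across the line $pu+qv=0$, which meets the open cone $u>v>0$ only when $p$ and $q$ have opposite signs with $|p|<|q|$; among degree-$3$ monomials subject to $a_0a_4=0$ the sole such line is $u=2v$, arising from $x_0x_3^2$ (with $(p,q)=(1,-2)$) and $x_1^2x_4$ (with $(p,q)=(-1,2)$). Hence the maximal $M_{\le 0}$ occur only along the two boundary rays $v=0$ and $u=v$ and the interior wall $u=2v$, that is for $(u,v)=(1,0),(1,1),(2,1)$, and every other normalized $\lambda_{u,v}$ yields a proper subset of one of these three. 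Carrying out this wall analysis carefully --- keeping track of the constraint $a_0a_4=0$ defining $\cB$ and of the weight-zero relation $x_0x_4\equiv-(x_1x_3+x_2^2)$ on $Q$ --- is the step I expect to be the main obstacle, since one must be certain that no stray $1$-PS destabilizes a cubic not already listed. The outcome is precisely the three maximal monomial sets recorded in Table 1.

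Finally I would expand $\mathrm{span}\,M_{\le 0}(\lambda_{u,v})$ for each of the three directions and group the monomials into the displayed shapes. For $\lambda_{1,0}$ the non-positive weight monomials are exactly those with $a_0=0$, giving the general cubic $c(x_1,x_2,x_3,x_4)$. For $\lambda_{1,1}$ the condition $a_0+a_1\le a_3+a_4$ singles out the monomials assembling into $x_0x_3\,l(x_2,x_3)+x_1x_2\,l_1(x_3,x_4)+x_1\,q(x_3,x_4)+c(x_2,x_3,x_4)$, and for $\lambda_{2,1}$ the condition $2a_0+a_1\le 2a_4+a_3$ yields the third displayed form. Since $f$ is not properly stable precisely when it is $SO(Q)(\CC)$-equivalent to an element of one of these three spans, and $SO(Q)(\CC)$ fixes $Q$, the surface is $X=Q\cap Y$ with $Y$ cut out by a cubic of one of the listed types. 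Beyond the stability reduction, the only genuine labor is the bookkeeping in matching each span to its displayed normal form.
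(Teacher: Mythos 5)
Your proposal is correct and follows essentially the same route as the paper: reduce to normalized diagonal one-parameter subgroups, apply the Hilbert--Mumford criterion, identify the three maximal sets $M_{\leq 0}(\lambda_{u,v})$ at $(u,v)=(1,0),(1,1),(2,1)$ (the paper's Table 1), and expand each span into the displayed normal form; your wall analysis at $u=2v$ merely makes explicit the reduction that the paper leaves implicit. One minor remark: the monomial $x_1x_4^2$ has weight $-3$ under $\lambda_{2,1}$ and hence lies in $M_{\leq 0}(\lambda_{2,1})$, but it is missing from the third displayed polynomial as printed --- this is an omission in the lemma's statement (one should read $l_2(x_1,x_2,x_3,x_4)$), not a gap in your argument.
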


For $f\in \PP(V)$ not properly stable,  using the destabilizing 1-PS $\lambda$,  the limit $\lim\limits_{t\rightarrow 0} f_t=f_0$ exists and it is invariant with respect to $\lambda$. The invariant part of polynomials of type $(N1)-(N3)$ are the followings:
\begin{enumerate}
 \item[($\alpha$)] $c(x_1,x_2,x_3)=0 $;
\item[($\beta$)] $\lambda_1 x_2^3+\lambda_2 x_1x_2x_3+\lambda_3 x_0x_2x_3+\lambda_4 x_1x_2x_4=0$,  $\lambda_i\in\CC$;
 \item[($\gamma$)] $\lambda_1x_2^3+ \lambda_2x_1x_2x_3 +\lambda_3x_0x_3^2+\lambda_4x_1^2x_4=0$, $\lambda_i\in\CC$.
 \\
\end{enumerate}

Similarly, we get
\begin{lemma}\label{lem:3} With the notation above, 
$X$ is not semistable if and only if $X=Q\cap Y$ for some cubic hypersurface $Y$ defined by one of the following equations:
\begin{itemize}
  \item $x_0x_3^2+x_1q(x_3,x_4)+c(x_2,x_3,x_4)$, and $c(x_2,x_3,x_4)$ has no $x_2^3$ term;
   \item $x_4q_1(x_1,x_2,x_3,x_4)+x_3q_2(x_2,x_3)+\lambda x_1x_3^2$.
\end{itemize} 
\end{lemma}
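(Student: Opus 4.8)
The plan is to mirror the proof of Lemma \ref{lem:2}, simply replacing the weakly negative weight condition by the strict one. By the Hilbert--Mumford numerical criterion, the surface $X=Q\cap Y$ with $Y=\{f=0\}$, $f\in\PP(V)$, fails to be semistable precisely when there is a one-parameter subgroup $\lambda$ of $SO(Q)(\CC)$ along which every monomial occurring in $f$ acquires strictly negative weight. Since every $1$-PS is conjugate in $SO(Q)(\CC)$ to a normalized one $\lambda_{u,v}$ with $u\geq v\geq 0$, and semistability is invariant under $SO(Q)(\CC)$, I would first reduce to the normalized case: $X$ is unstable if and only if, after replacing $f$ by a suitable $SO(Q)(\CC)$-translate, the support of $f$ in the basis $\cB$ lies in $M_{<0}(\lambda_{u,v})$ for some normalized $\lambda_{u,v}$.

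The second step is to cut down to finitely many $\lambda_{u,v}$. The weight $(a_0-a_4)u+(a_1-a_3)v$ is linear in $(u,v)$, so $M_{<0}(\lambda_{u,v})$ is constant on the open chambers into which the walls $(a_0-a_4)u+(a_1-a_3)v=0$ subdivide the cone $\{u\geq v\geq 0\}$, and only inclusion-maximal sets $M_{<0}$ are relevant. By the computation recorded in Table 2 the two sets one must test are attained at $\lambda_{1,0}$ (case U1) and $\lambda_{1,1}$ (case U2), with $x_1^2x_4$ and $x_0x_3^2,x_2^2x_3$ the critical monomials pinning down the relevant walls. Thus $X$ is unstable exactly when, in suitable coordinates, $\mathrm{supp}(f)\subseteq M_{<0}(\lambda_{1,0})$ or $\mathrm{supp}(f)\subseteq M_{<0}(\lambda_{1,1})$.

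The third step is bookkeeping: translate each monomial condition into the stated normal form. For $\lambda_{1,0}$ the negative-weight monomials are those with $a_0=0$ and $a_4\geq 1$, i.e. the cubics divisible by $x_4$ with no $x_0$, which in the basis $\cB$ is exactly $x_4\,q(x_1,x_2,x_3,x_4)$, the first bullet. For $\lambda_{1,1}$ one lists the monomials with $(a_0-a_4)+(a_1-a_3)<0$ and rewrites them using the relation $x_0x_4(\cdot)\equiv-(x_1x_3+x_2^2)(\cdot)$ coming from $Q$; the outcome is $x_0x_3^2+x_1q(x_3,x_4)+c(x_2,x_3,x_4)$ with $c$ lacking the $x_2^3$ term, the second bullet. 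This is the strict analogue of the reduction to types (N1)--(N3) in Lemma \ref{lem:2}, the inequality now singling out (U1),(U2).

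I expect the main obstacle to be the completeness claim underlying the second step: that no normalized $1$-PS in the interior of the cone---most notably $\lambda_{2,1}$, which is genuinely needed for the non-stable classification of Lemma \ref{lem:2}---contributes a further unstable family. A cubic destabilized only by such an interior subgroup, for instance $x_1x_2x_4+x_2^2x_3$, has each of its monomials in $M_{<0}(\lambda_{1,0})$ or in $M_{<0}(\lambda_{1,1})$ but need not lie in a single one; the delicate point is to use the residual unipotent symmetries of $SO(Q)(\CC)$ fixing the relevant isotropic flag to clear away the offending monomials and bring $f$ into one of the two families, all while checking that passing back to the basis $\cB$ via the $Q$-relation introduces no monomial of non-negative weight.
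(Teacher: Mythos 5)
Your overall strategy is the paper's own (Hilbert--Mumford reduction to normalized one-parameter subgroups plus Table~2), but the completeness claim in your second step is not merely ``delicate'' --- as stated it is false, and your final paragraph correctly identifies the problem without resolving it. The cone $\{u\geq v\geq 0\}$ has a single interior wall $u=2v$ (coming from the monomials $x_0x_3^2$ and $x_1^2x_4$), and the inclusion-maximal sets $M_{<0}$ are attained on the \emph{open} chambers, not at the boundary rays $(1,0)$ and $(1,1)$: one checks directly that
\[
M_{<0}(\lambda_{3,1})=M_{<0}(\lambda_{1,0})\cup\{x_1x_3^2,\ x_2^2x_3,\ x_2x_3^2,\ x_3^3\}\supsetneq M_{<0}(\lambda_{1,0}),
\]
and similarly $M_{<0}(\lambda_{3,2})\supsetneq M_{<0}(\lambda_{1,1})$ (it additionally contains $x_1x_2x_4$). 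So the assertion that ``only (U1) and (U2) need be tested'' does not follow from maximality of weight sets; one must prove that every $f$ supported on one of the genuinely maximal sets is $SO(Q)(\CC)$-equivalent to one of the two listed normal forms, and that is precisely the step you defer to ``residual unipotent symmetries'' without carrying out.

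The deferred step is not a formality. Your own test polynomial is essentially $f=x_1x_2x_4+x_2^2x_3=x_2(x_1x_4+x_2x_3)$: both monomials lie in $\cB$ and have weight $-1$ for $\lambda_{2,1}$, so $X=Q\cap\{f=0\}$ is unstable, yet neither $\lambda_{1,0}$ nor $\lambda_{1,1}$ destabilizes it. Its quadric component is $Q\cap\{x_2=0\}$, a \emph{smooth} quadric surface (the hyperplane $x_2=0$ is not tangent to $Q$), whereas every member of the family $x_4q(x_1,\dots,x_4)$ has the quadric-cone component $Q\cap\{x_4=0\}$; and along each of its singular lines the projectivized tangent cone is not a single double plane independent of the point, unlike the second family. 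So this $X$ does not visibly become one of the two normal forms under any change of coordinates preserving $Q$: either the list in the statement must be enlarged (by the interior-chamber families) or a substantive geometric argument must be supplied to absorb them. The paper's own proof consists of the single word ``Similarly,'' so it does not close this gap either; but a proof that both asserts the two-family reduction and then concedes it is the unproved crux leaves the essential content of the lemma unestablished.
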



\subsection{Geometric interpretation of stability}  We use the terminology of the corank of the hypersurface singularities as in \cite{AGV12} and \cite{La09}.
\begin{defn}
Let $0\in \CC^n$ be a hypersurface singularity given by an equation $f(z_1,\ldots,z_n)=0$. The corank of $0$ is $n$ minus the rank of the Hessian of $f(z_1,\ldots,z_n)$ at $0$. 
\end{defn}

\begin{theorem}\label{thm:stable}
A complete intersection $X=Q\cap Y$ is not properly stable if and only if $X$ satisfies one of the following conditions:
\begin{enumerate}[(i)]
  \item $X$ has a  hypersurface singularity of corank $3$.
  \item $X$ is singular along a line $L$ and  there exists a plane $P$ such that  $P \cap Q=2L$  and $P$ is  contained in the projective tangent cone $\PP(CT_p(X))$ for any point $p\in L$.
  \item $X$ has a singularity $p$ which deforms to a singularity of $\widetilde{E}_8$ class, and the restriction of  the projective cone $\PP(CT_{p}(X))$  to $X$ contains a line $L$ passing through $p$ with multiplicity at least $6$.  
\end{enumerate}
\end{theorem}

\begin{proof}  As a consequence of Lemma \ref{lem:2}, it suffices to find the geometric characterizations of the complete intersections of type $(N1)-(N3)$. Here we do it case by case.

(i). If  $X$ is of type $(N1)$, then $X$ can be considered as the intersection of $Q$ and a cubic cone $Y$ with the vertex $p_0=[1,0,0,0,0]\in Q$. It is easy to see that  $p_0$ is a corank of  3 singularity of $X$.

 Conversely,  we write the equation of $Y$ as
$$x_0q(x_0,x_1,x_2,x_3)+c(x_1,x_2,x_3,x_4)=0.$$   
If we choose the affine coordinate
 \begin{equation}\label{eq:3.0}
  y_i:=x_i/x_0,
\end{equation} then the affine equation near $p_0$ is \begin{equation}\label{eq:3.2}
q(1,y_1,y_2,y_3)+c(y_1,y_2,y_3,-y_2^2-y_1y_3)=0.
\end{equation} in $\CC^3$.
It  has a corank $3$ singularity at the origin if and only if the quadric $q$ is $0$.
\\

(ii). If $X$ is of type $(N2)$,  then the equation of $Y$ is given by
$$ x_0x_3l(x_2,x_3)+x_1x_2\ell_1(x_3,x_4)+x_1q(x_3,x_4)+c(x_2,x_3,x_4),$$
and therefore $X$ is singular along the line  $L: x_2=x_3=x_4=0$. 

Moreover, for any point $p=[z_0,z_1,0,0,0]\in L$, the projective tangent cone $\PP(CT_p(X))$ at $p$ is defined as
\begin{equation} 
z_0x_4+z_1x_3= z_0x_3l(x_2,x_3)+z_1(x_2\ell_1(x_3,x_4)+q(x_2,x_3))=0,
\end{equation}
which contains the plane $P:x_3=x_4=0$ for each $p\in L$ and $P\cap Q=2L$.

Conversely, since the intersection of  $P$ and $Q$ is a double line $L$, we may certainly assume that the plane $P$ is defined by $$x_3=x_4=0$$ after some coordinate transform persevering the quadric form $Q$. Then the line $L=P\cap Q$ is given by $x_2=x_3=x_4=0$.

Because $X$ is singular along $L$, the equation of $Y$ can be written as:
 \begin{equation}
 x_0q_1(x_2,x_3)+x_1q_2(x_2,x_3,x_4)+c(x_2,x_3,x_4)=0.
 \end{equation}
  Then the projective tangent cone 
 $$\PP(CT_p (X))=\{z_0x_4+z_1x_3=z_0q_1(x_2,x_3,x_4)+z_1q_2(x_2,x_3,x_4)=0\}$$ 
 contains the plane $P$ for each point $p=[z_0,z_1,0,0,0]\in L$ only if the quadrics $q_i$ have no $x_2^2$ term. 
\\

(iii). For $X$ of type $(N_3)$, a similar discussion is as follows:  if $Y$ is defined by 
\begin{equation} 
x_0x_3^2+x_1x_3l_1(x_2,x_3)+x_1x_4l_2(x_1,x_2,x_3)+c(x_2,x_3,x_4)=0,
\end{equation}
then $X=Q\cap Y$ is singular at $p_0$.  After choosing the affine coordinates as (\ref{eq:3.0}), the affine equation near $p_0$ is  
\begin{equation}\label{eq:3.6} 
y_3^2+y_1y_3^2 f(y_1,y_2,y_3)+y_1y_2^2\ell(y_1,y_2)+ay_1y_2y_3+g(y_2,y_3)=0
\end{equation} 
for some polynomials $\ell, f,g$ with $\ell$ linear, $deg(f)\geq 1, deg(g)\geq 3$. Therefore, $p_0$ is a hypersurface singularity of corank $2$ and its projective tangent cone is a double plane $2P:x_3^2=x_4=0$. The remaining part is  straightforward. 

Conversely,  we take $p_0$ to be the isolated singular point which deforms to a singularity of $\widetilde{E}_8$ class. As it has corank at least $2$,  the equation of $Y$  can be written as  
$$ x_0q_1(x_1,\ldots,x_3)+x_1q_2(x_1,\ldots,x_4)+c(x_2,x_3,x_4)=0.$$ 
Then the quadric $ q_1(x_1,x_2,x_3)$ is of the form $l(x_1,x_2,x_3)^2$ for some linear polynomial $l$ because  $p_0$ is singular of  corank at least $2$.  

After we make a coordinate change preserving $Q$ and $p_0$,  the defining equation of $Y$ has two possibilities: 
\begin{enumerate}
\item  $x_0x_2^2+x_1q(x_1,x_2,x_3,x_4)+c(x_2,x_3,x_4)=0,$
\item $x_0x_3^2+x_1q(x_1,x_2,x_3,x_4)+c(x_2,x_3,x_4)=0.$
\end{enumerate}
The projective tangent cone at $\PP(CT_{p_0}(X))$ is a double plane 
$$2P:x_4=x_2^2=0,\hbox{ or }x_4=x_3^2=0.$$ 
The line $L$ contained in the  restriction of $2P$ to $X$ has to be defined by $x_2=x_3=x_4=0$. It follows that the first case can not happen since $P\cap X$ contains $L$ with multiplicity at least $3$. 

In the second case,  the multiplicity condition further implies that the quadric $q(x_1,x_2,x_3,x_4)$ does not have $x_1^2,x_1x_2,x_2^2$ terms. To see that there is no $x_1x_3$ term,  note that  the affine local equation ($y_i=x_i/x_0$) near $p_0$   can be written  as 
\[y_3^2+ by^2_1y_3+y_1y^2_3g(y_1,y_2,y_3)+y_1y_2^2\ell(y_1,y_2)+ay_1y_2y_3+c(y_2,y_3)=0\]
where $a,b\in\CC$, $g,c$ are polynomials  with $deg(c)\geq 3$.  Since $p_0$ deforms to $\widetilde{E}_8$,  we know that $a$ has to be $0$. 
\end{proof}


\begin{theorem}\label{thm:3}
A complete intersection $X=Q\cap Y$ is unstable if and only if $X$ satisfies one of the following conditions:
\begin{enumerate}[($i'$)]
  \item $X$ is singular along a line $L$ satisfying the condition:  there exist a plane $P$ such that $\PP(CT_p(X))=2P$ for any point $p\in L$;
    \item there exists a plane $P$ whose restriction to $X$ is  a line $L$ with multiplicity $6$  and $X$ has a  a corank $3$ singularity $p$ on $ L$. Moreover,  the projective tangent cone $\PP(CT_p(X))$ at $p$ is the union of the plane $P$ and a quadric surface and they meet at $L$ with multiplicity two.
\end{enumerate}
\end{theorem}
\begin{proof} We check the complete intersections of type $(U1)-(U2)$ case by case.

($i'$). To simplify the proof, we choose another monomial basis of $V$ as below: 
\begin{equation}\label{eq:3.7} 
 \cB':=\{x_0^{a_0}\ldots x_4^{a_4}|\sum\limits_{i=0}^4 a_i=3,~a_2\leq1\}.
\end{equation}   
Then the polynomial  of type $(U1)$ has the form 
\begin{equation}
x_0q_0(x_3,x_4)+x_1q_1(x_3,x_4)+x_2q_2(x_3,x_4)+c(x_3,x_4)=0.
\end{equation} 
At this time, $X$ is singular along the line $L:x_2=x_3=x_4=0$ and satisfies the condition described in ($i'$).

On the other hand,  the line $L$ on $Q$ can be written as $$L:x_2=x_3=x_4=0$$ for a suitable change of coordinates preserving $Q$. Then the equation of $Y$ has the form $$  \sum\limits_{i=0}^1 x_iq_i(x_2,x_3,x_4)+x_2q_2(x_3,x_4)+c(x_3,x_4)=0,$$ where $q_i$ does not contain $x_2^2$ term. 

Moreover, for any point $p=[z_0,z_1,0,0,0]\in L$,  the projective tangent cone $\PP(CT_p(X))$ is given by   $$z_0x_3+z_1x_4=z_0q_0(x_2,x_3,x_4)+z_1q_1(x_2,x_3,x_4)=0.$$  They have a common plane $P$ with multiplicity $2$ if and only if  $P$ is defined by $ x_3=x_4=0$ and $q_i(x_2,x_3,x_4)$ does not contain the $x_2x_3,x_2x_4$ terms.

($ii'$). When $Y$ has the equation 
$$x_4q_1(x_1,\ldots,x_4)+x_3q_2(x_2,x_3)+\lambda x_1x_3^2=0,$$
one observe that $X$ contains the line $L:x_2=x_3=x_4=0$ which is contained in the plane $P:=x_3=x_4=0$. It is easy to see that $P$ intersect with $X$ is the line $L$ with multiplicity $6$. Moreover, $X$ is singular at  $p_0=[1,0,0,0,0]$ and the projective cone at $p_0$ is given by
$$\{x_4=x_3q_2(x_2,x_3) +\lambda x_1x_3^2=0 \},$$
which is the union of the plane $X_1: x_3=x_4=0$ and the quadratic surface $X_2:x_4=q_2(x_2,x_3)+\lambda x_1x_3=0$ satisfying the desired conditions. The proof of the converse is quite similar as the previous cases and we omit the details here.

\end{proof}

\begin{corollary}
A complete intersection $X=Q\cap Y$ is semistable (resp. stable) if $X$ has at worst isolated singularities (resp.  simple singularities). 
\end{corollary}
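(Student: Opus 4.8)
The plan is to prove both implications by contraposition, reading the already-established Theorems \ref{thm:2} and \ref{thm:3} as a dictionary between the failure of (semi)stability and explicit geometric degenerations, and then checking that each such degeneration is incompatible with the stated singularity hypothesis. Since those theorems give \emph{iff} characterizations, the ``only if'' directions are exactly what is needed, so no new stability computation is required.

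First I would treat the semistable assertion. Suppose $X=Q\cap Y$ has at worst isolated singularities and, for contradiction, that $X$ is unstable. By Theorem \ref{thm:3} one of cases $(i')$, $(ii')$ occurs. In case $(i')$ the surface is reducible, $X=X_1\cup X_2$, so its singular locus contains the intersection $X_1\cap X_2$; as these are two surface components inside the threefold $Q$, this intersection is a curve, hence positive-dimensional. In case $(ii')$ the surface is by hypothesis singular along an entire line $L$. In either case the singular locus is not finite, contradicting the assumption, so $X$ must be semistable.

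Next I would treat the stable assertion. Suppose $X$ has at worst simple (ADE) singularities and that $X$ is not properly stable, so that by Theorem \ref{thm:2} one of $(i)$, $(ii)$, $(iii)$ holds; I would rule out each. Case $(ii)$ forces a line of singularities, hence a non-isolated, thus non-simple, point. For case $(i)$ I would invoke the classification of simple surface singularities: the $A_n$ normal form has corank $1$ and the $D_n,E_n$ forms have corank $2$, so no ADE point can have corank $3$. The decisive case is $(iii)$, where I must show that its hypotheses force a singularity strictly worse than ADE for \emph{every} such $X$, not merely for a general member.

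For case $(iii)$ I would appeal to the local analysis behind Remark \ref{rem:1}: the corank-$2$ condition together with the multiplicity-$6$ condition on the line $L$ pins the local analytic equation at $p$ down to the normal form \eqref{eq:3.6}, namely $y_3^2+y_1y_3 f(y_1,y_2,y_3)+g(y_2,y_3)=0$ with $\deg f\ge 1$ and $\deg g\ge 3$, which after the substitution of Remark \ref{rem:1} is equivalent to $u^2+v^3+w^6+auvw$. This is a simple elliptic singularity of type $\tilde{E}_8$ (or a non-isolated degeneration of it), and a simple elliptic singularity is by definition not ADE, giving the contradiction and hence proper stability. The main obstacle, and where I expect to spend the most care, is precisely this last point: I must verify that the multiplicity-$\ge 6$ hypothesis genuinely forces the $w^6$ term (rather than some lower power that could yield an isolated ADE point), so that the conclusion holds uniformly across all $X$ in case $(iii)$; this amounts to carefully tracing the monomial/weight bookkeeping of Lemma \ref{lem:2} into the local expansion \eqref{eq:3.6}.
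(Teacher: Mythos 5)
Your argument is correct and follows essentially the same route as the paper: read Theorems \ref{thm:2} and \ref{thm:3} contrapositively, observing that instability forces a positive-dimensional singular locus and that failure of proper stability forces either a non-isolated singularity or one worse than ADE. Your explicit handling of case (i) via the corank bound for ADE points, and your flagging of the multiplicity-$\geq 6$ condition as exactly what excludes an $E_8$ (as opposed to $\tilde{E}_8$) point in case (iii), are somewhat more careful than the paper's bare appeal to Remark \ref{rem:1}, but the strategy is the same.
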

\begin{proof}
By Theorem \ref{thm:3}, the singular locus of $X$ is at least one dimensional if it is unstable. Then $X$ has to be semistable if it has at worst isolated singularities.

Next, from Theorem \ref{thm:stable}, we know that if $X$ is not properly stable, then either $X$ is singular along a curve or it contains at least an isolated simple elliptic singularity. It follows  that  $X$ with simple singularities is stable. 
\end{proof}
Now it makes sense to talk about the moduli space $\cM_6$ of  complete intersections of a smooth quadric and a cubic  with simple singularities. Let $\cU_6$ be the open subset of $\PP(V)^s$ parameterizing such complete intersections in $\PP^4$.  Then we have $\cM_6=\cU_6/SO(5)(\CC)$.

\begin{theorem}\label{thm2.11}
There is an open immersion $\cP_6:\cM_6\rightarrow \cF_6$ via the period map and the image of $\cP_6$ in $\cF_6$ is the complement  of three NL-divisors $D^6_{1,1}, D^6_{2,1}$ and $D^6_{3,1}$. The Picard group $\Pic_\QQ(\cF_6)$ is spanned by $\{D^6_{d,1},~1\leq d\leq4\}$. 
\end{theorem}

\begin{proof}
For the first statement, one only need the fact that the complete intersections with simple singularities correspond to degree 6 quasi-polarized K3 surfaces containing a $(-2)$ curve. Therefore, we obtain an open immersion $\cP_6:\cM_6\rightarrow \cF_6$ from Torelli theorem. By Lemma \ref{lem:1}, we know that the boundary divisors of the image $\cP_6(\cF_6)$ is the union of  $D^6_{1,1}, D^6_{2,1}$ and $D^6_{3,1}$. 

Next, the moduli space $\cM_6$ is isomorphic to the quotient $\cU_6/SO(5)(\CC)$.  Observing that $\Pic(\cU_6)\cong \Pic(\PP(V))$ has rank one  since the boundary of $\cU_6$ in $\PP(V)$ has codimension at least two, we claim that the dimension of  $\Pic_\QQ(\cM_6)$ is at most one. Denote by $\Pic(\cU_6)_{SO(5)(\CC)}$ the set of $SO(5)(\CC)$-linearized line bundles on $\cU_6$. There is an injection 
 $$\Pic(\cU_6/SO(5)(\CC))\hookrightarrow \Pic(\cU_6)_{SO(5)(\CC)}$$ by \cite{KKV89} Proposition 4.2 for the reductive group $SO(5)(\CC)$. Our claim then follows from the fact  the forgetful map $\Pic(\cU_6)_{SO(5)(\CC)}\rightarrow \Pic(\cU_6) $ is an injection. Actually, one can easily see that $\Pic_\QQ(\cM_6)$ is spanned by the descent of the tautological line bundle $\cO_{\cU_6}(1)$ on $\cU_6$ to the quotient $\cU_6/SO(5)(\CC)$, and we denote it by $\cO_{\cM_6}(1)$.

Since the complement of $\cP_6(\cM_6)$  in $\cF_6$ is the union of three irreducible divisors and $\dim_\QQ(\Pic(\cF_6))\geq 4$, it follows that $\Pic_\QQ(\cF_{6})$ is spanned by  NL-divisors $\{D^{6}_{d,1}, 1\leq d\leq 4\}$ by the dimension consideration. 
\end{proof}

\begin{remark}
There is another natural GIT construction of moduli space of complete intersections in projective spaces, see \cite{Lo03, Be14}. There exists a projective bundle $\pi:\PP E\rightarrow \PP(H^0(\PP^5,\cO_{\PP^5}(2)))\cong \PP^{14}$ parameterizing all complete intersections of a quadric and a cubic in $\PP^5$. Then one can consider the GIT quotient $$\PP(E)/\!\!/_{H_t}SL_5(\CC)$$ for the line bundle $H_t=\pi^\ast \cO_{\PP^{14}}(1)+t\cO_{\PP E}(1)$. We want to point out that  $\PP(E)/\!\!/_{H_t}SL_5(\CC)$ is isomorphic to our GIT quotient  $\PP(V)/\!\!/SO(5)(\CC)$ when $t<1/6$. This will be discussed in the upcoming paper \cite{GLRST}. 
\end{remark}

%
%

\subsection{Minimal orbits} In this section, we give a  description of the semistable boundary components of the  GIT compactification. It consists of strictly semistable points with minimal orbits. From $\S 3.2$, it suffices to discuss the points  of type $(\alpha)-(\gamma)$. 
As in \cite{La09}, our approach is to use Luna's criterion:
\begin{lemma}{(Luna's criterion)}\cite{Lu75}
Let $G$ be a reductive group acting on an affine variety $V$. If $H$ is a reductive subgroup of $G$ and $x\in V$ is stabilized by $H$, then the orbit $G\cdot x$ is closed if and only if $C_G(H)\cdot x$ is closed.
\end{lemma}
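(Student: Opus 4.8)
The plan is to work over $\CC$, as in our setting, and to translate the closedness of orbits into the language of minimal vectors via the Kempf--Ness theorem. First I would replace the affine variety $V$ by a finite-dimensional $G$-representation into which it embeds $G$-equivariantly, so that it suffices to treat linear actions. Next I would choose a maximal compact subgroup $K\subseteq G$ compatibly with $H$, meaning $K\cap H$ is a maximal compact of the reductive group $H$, together with a $K$-invariant Hermitian norm $\|\cdot\|$; since $H$ stabilizes $x$ we have $x\in V^H$, and the centralizer $C:=C_G(H)$ preserves the fixed subspace $V^H$. Recall that for a reductive group a $G$-orbit is closed if and only if it contains a minimal vector, i.e.\ a zero of the moment map $\mu_K\colon V\to\frk^{\ast}\cong\frk$, and likewise a $C$-orbit in $V^H$ is closed if and only if it contains a zero of the moment map $\mu_{K\cap C}$ for the $C$-action.

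The key observation I would isolate is a compatibility of the two moment maps along $V^H$. By equivariance $\mu_K(kv)=\mathrm{Ad}(k)\mu_K(v)$, so for $v\in V^H$ and $k\in K\cap H$ (which fixes $v$) the value $\mu_K(v)$ is $\mathrm{Ad}(K\cap H)$-invariant; hence it lies in the Lie algebra of the centralizer, $\frc\cap\frk$. Since $\mu_{K\cap C}$ is just the $\frc\cap\frk$-component of $\mu_K$, this gives, for every $v\in V^H$, the equivalence that $v$ is a $G$-minimal vector if and only if it is a $C$-minimal vector. One implication of the lemma is then immediate: if $C\cdot x$ is closed it contains a $C$-minimal vector $v\in V^H$, which is automatically $G$-minimal, so $G\cdot x=G\cdot v$ is closed.

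For the converse I would run the Kempf--Ness gradient flow of $\|\cdot\|^2$ starting at $x$. Because at any point of $V^H$ the flow direction is governed by $\mu_K$, which we just saw lies in $\frc$, the subspace $V^H$ is flow-invariant and the trajectory stays inside $\overline{C\cdot x}$; it converges to an $H$-fixed vector $v_\infty$ that is simultaneously $C$-minimal and $G$-minimal, and if $G\cdot x$ is closed then $v_\infty\in G\cdot x$. At this point one knows that the infimum of the norm over $\overline{C\cdot x}$ equals the $G$-minimal value and is attained at $v_\infty$; the content that remains---and the step I expect to be the main obstacle---is to upgrade this to the statement that the minimum is attained inside $C\cdot x$ itself, equivalently that $C\cdot x$ is already closed. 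I would resolve this with Luna's \'etale slice theorem applied at the closed orbit $G\cdot v_\infty$: the slice linearizes the situation to the action of the reductive stabilizer $G_{v_\infty}\supseteq H$ on the normal space $N$, under which $V^H$ corresponds to $N^H$ and $C$ to $C_{G_{v_\infty}}(H)$, reducing the claim to the linear action of a smaller reductive group and permitting an induction on $\dim V+\dim G$. Concretely, comparing $x$ with $v_\infty$ through the parabolic attached to a putative destabilizing one-parameter subgroup in $C$ reduces everything to showing that two $G$-conjugate copies of $H$ sitting inside the reductive group $G_{v_\infty}$ are conjugate by $C_{G_{v_\infty}}(H)$ up to the stabilizer; this is exactly where reductivity of $H$ is used, to kill the unipotent obstruction to such a conjugacy.
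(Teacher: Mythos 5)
The paper offers no proof of this lemma --- it is quoted directly from Luna \cite{Lu75} --- so your attempt can only be judged on its own terms. Your first direction is correct and well executed: for $v\in V^H$ the equivariance of the moment map shows $\mu_K(v)$ is $\mathrm{Ad}(K\cap H)$-fixed, and Zariski-density of the maximal compact $K\cap H$ in the reductive group $H$ upgrades this to $\mu_K(v)\in\frk\cap\frc$, so that $G$-minimal and $C_G(H)$-minimal vectors coincide on $V^H$; Kempf--Ness then gives ``$C_G(H)\cdot x$ closed $\Rightarrow G\cdot x$ closed'' cleanly. This is also exactly where reductivity of $H$ legitimately enters that half of the argument.

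The converse, however, contains a genuine gap, which you have correctly located but not closed. Your flow argument does produce $v_\infty=gx\in G\cdot x\cap V^H\cap\mu_K^{-1}(0)$ lying in the unique closed $C_G(H)$-orbit of $\overline{C_G(H)\cdot x}$; what remains is to show $g\in C_G(H)\cdot G_x$. The statement you propose to reduce this to --- that two $G$-conjugate copies of $H$ inside the reductive group $G_{v_\infty}$ are conjugate by $C_{G_{v_\infty}}(H)$ --- is false in general: $G$-conjugate reductive subgroups of a reductive subgroup $M$ need not even be $M$-conjugate. The input Luna actually uses is different and is the real content of the theorem: for reductive subgroups $H\subseteq M\subseteq G$, the set $\{g\in G:\ g^{-1}Hg\subseteq M\}$ is a \emph{finite} union of \emph{closed} $(N_G(H),M)$-double cosets. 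Applied to $M=G_x$ this exhibits $G\cdot x\cap V^H$ as a finite disjoint union of closed pieces; your trajectory is a connected subset of this union starting in the piece through $x$, hence $v_\infty$ lies in that same (closed) piece, and one descends from $N_G(H)$ to $C_G(H)$ using that $N_G(H)^{\circ}\subseteq C_G(H)\cdot H$ and that $H$ fixes $x$. Reductivity of $H$ and of $G_x$ is what makes the double cosets closed (ruling out unipotent degeneration of one coset into another); without this lemma or an equivalent substitute, the slice-theorem induction you sketch bottoms out in a false statement, and the direction ``$G\cdot x$ closed $\Rightarrow C_G(H)\cdot x$ closed'' remains unproved.
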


To start with, we first observe that Type $(\alpha)$, $(\beta)$ and $(\gamma)$ have a common specialization, which we denote by Type $(\xi)$: 
 $$ \lambda_1 x_2^3+\lambda_2 x_1x_2x_3=0.$$

\begin{lemma}
If $X$ is of Type $(\xi)$, it is strictly semistable with closed orbits.
\end{lemma}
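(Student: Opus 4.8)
The plan is to establish closedness of the orbit first, by Luna's criterion, and then to read off both semistability and the failure of proper stability almost for free. Everything is driven by a single observation: the two monomials occurring in Type $(\xi)$, namely $x_2^3$ and $x_1x_2x_3$, have weight $0$ with respect to \emph{every} normalized 1-PS $\lambda_{u,v}$, since the weight $(a_0-a_4)u+(a_1-a_3)v$ vanishes on the exponent vectors $(0,0,3,0,0)$ and $(0,1,1,1,0)$ for all $u,v$. Consequently the full maximal torus $T=\{\mathrm{diag}(s,t,1,t^{-1},s^{-1}):s,t\in\CC^\ast\}$ of $SO(Q)(\CC)$ — which preserves $x_0x_4+x_1x_3+x_2^2$ and has determinant $1$, hence lies in $SO(Q)(\CC)$ — fixes $f=\lambda_1 x_2^3+\lambda_2 x_1x_2x_3$ pointwise, not merely up to scalar.

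First I would apply Luna's criterion to $G=SO(Q)(\CC)$ with the reductive subgroup $H=T$, which fixes $f$. Because $T$ is a maximal torus of the connected reductive group $SO(5)(\CC)$, its centralizer is $C_G(T)=T$; and since $T$ fixes $f$, the orbit $C_G(T)\cdot f=T\cdot f=\{f\}$ is a single point, hence closed. Luna's criterion then yields that $G\cdot f$ is closed in the affine cone $V$.

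From this closedness the two remaining assertions follow. Since $G\cdot f$ is closed in $V$ and $f\neq 0$, the origin does not lie in $\overline{G\cdot f}=G\cdot f$; as the null-cone (the unstable locus) is precisely the set of vectors whose orbit closure contains $0$, this already shows that $f$ is semistable. On the other hand $f$ is not properly stable, because its stabilizer contains the two-dimensional torus $T$, whereas properly stable points have finite stabilizer. Hence $X$ is strictly semistable with closed orbit.

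I do not expect a serious obstacle: the argument rests on the weight-$0$ observation together with two standard inputs, namely that the centralizer of a maximal torus is the torus itself and that instability is equivalent to the origin lying in the orbit closure. The one point to handle carefully is the meaning of \emph{closed orbit}: Luna produces closedness in the affine cone $V$, which is exactly the polystability we want, and one should record that $0\notin G\cdot f$ so that this remains consistent upon passing to the semistable locus of $\PP(V)$. Had the torus not fixed $f$, proving semistability directly would instead require ruling out that some translate $g\cdot f$ falls into one of the unstable normal forms of Lemma \ref{lem:3}, which would be considerably more laborious; so the weight-$0$ observation is what keeps the proof short.
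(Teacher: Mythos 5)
Your proof is correct and follows essentially the same route as the paper: both arguments invoke Luna's criterion to reduce to the maximal torus of $SO(Q)(\CC)$ and then observe that a Type $(\xi)$ element is fixed by that torus, so the relevant centralizer orbit is trivially closed. The only cosmetic difference is that the paper takes $H$ to be a regular one-parameter subgroup whose centralizer is the maximal torus, whereas you take $H$ to be the torus itself; your explicit derivation of semistability from $0\notin G\cdot f$ and of non-proper-stability from the positive-dimensional stabilizer just fills in steps the paper leaves implicit.
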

\begin{proof}
The stabilizer of Type $(\xi)$ contains a 1-PS: 
$$ H=\{diag(t^2,t,1,t^{-1},t^{-2})|~t\in \CC^\ast\},$$
 of distinct weights. So the  center 
$$C_G(H)=\{diag(a_0,a_1,1,a_1^{-1},a_0^{-1})\}\subset SO(Q)(\CC)$$
is a maximal torus. It acts on $V^H=\left<x_0x_3^2,x_1^2x_4,x_1x_2x_3,x_2^3\right>\subset V$.   It is straightforward to see any element of Type $(\xi)$ is semistable with closed orbit in $V^H$ under the action. Then the statement follows from Luna's criterion. 
\end{proof}

\begin{proposition}\label{alpha}
Let $X$ be a surface of Type $(\alpha)$. Then it has two corank $3$ singularities. Moreover, we have
\begin{enumerate}
\item  $X$ is unstable if it is union of a quadric surface and a quadric cone with multiplicity two.   
\item The orbit of $X$ is not closed if $X$ is singular along two lines. It degenerates to type $\xi$. 
\end{enumerate}
Otherwise, $X$ is semistable with closed orbit.  
\end{proposition}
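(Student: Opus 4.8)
My plan is to establish the two corank $3$ points by a direct local computation, then reduce the semistability and orbit-closedness of $X$ to a question about plane cubics under $\SO(3)$ by Luna's criterion, and finally read off the three cases from the resulting numerical classification. I first record that a Type $(\alpha)$ surface is $X=Q\cap Y$ with $Y=\{c(x_1,x_2,x_3)=0\}$ the cone, with vertex the line $\ell=\{x_1=x_2=x_3=0\}$, over the plane cubic $C=\{c=0\}$ in $\Pi_0=\{x_0=x_4=0\}$; write $\Sigma:=Q\cap\Pi_0=\{x_1x_3+x_2^2=0\}$ for the invariant conic. Since $\ell\cap Q=\{p_0,p_4\}$ with $p_0=[1,0,0,0,0]$ and $p_4=[0,0,0,0,1]$, the local computation in the proof of Theorem \ref{thm:2}(i) applies verbatim: in the chart $y_i=x_i/x_0$ one eliminates $y_4=-(y_1y_3+y_2^2)$ through $Q$ and obtains the affine equation $c(y_1,y_2,y_3)=0$, a pure cubic whose Hessian vanishes, so $p_0$ is a corank $3$ point; the symmetry $x_0\leftrightarrow x_4,\ x_1\leftrightarrow x_3$ gives the same at $p_4$, proving the first assertion for every Type $(\alpha)$ surface.

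Next comes the reduction. Every Type $(\alpha)$ point is fixed by the reductive subgroup $H=\lambda_{1,0}(\CC^\ast)=\{\mathrm{diag}(t,1,1,1,t^{-1})\}$, whose centraliser is $C_G(H)=\SO(3)\times\CC^\ast$: here $\SO(3)$ acts on $\langle x_1,x_2,x_3\rangle$ preserving $\Sigma$, while the central factor $\CC^\ast=H$ scales $(x_0,x_4)$ and acts trivially on the fixed space $V^H=\mathrm{Sym}^3\langle x_1,x_2,x_3\rangle$. By Luna's criterion the orbit $\SO(5)(\CC)\cdot X$ is closed if and only if the $\SO(3)$-orbit of $c$ in $\PP(V^H)$ is closed, and the same reduction matches the (in)stability of $X$ with that of the plane cubic $c$ under $\SO(3)$. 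I would then analyse the latter by Hilbert--Mumford along the one-parameter subgroup $\lambda_{0,1}=\{\mathrm{diag}(1,t,1,t^{-1},1)\}$ and its $\SO(3)$-conjugates, for which the weight of $x_1^{a_1}x_2^{a_2}x_3^{a_3}$ is $a_1-a_3$; the two fixed points of this torus on $\Sigma\cong\PP^1$ are $[1{:}0{:}0]$ and $[0{:}0{:}1]$, and its weight-$0$ subspace is exactly $\langle x_2^3,\ x_1x_2x_3\rangle$, i.e. Type $(\xi)$.

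Identifying the three cases is then a matter of translating the numerical outcome through the conic bundle $X\to C$ obtained by projecting from $\ell$, whose fibre is a smooth conic over $C\setminus\Sigma$ and breaks into a pair of lines (one through $p_0$, one through $p_4$) over each point of $C\cap\Sigma$. When $c=l^2m$ with $\{l=0\}$ tangent to $\Sigma$, the cone $Y$ is a plane together with a doubled plane and $X$ becomes the union of a quadric surface and a quadric cone with multiplicity two; this lies among the unstable normal forms of Lemma \ref{lem:3} (equivalently it is the reducible case $(i')$ of Theorem \ref{thm:3}), giving (1). When instead $C$ has a node at a point $r\in\Sigma$, the cone $Y$ is singular along the plane $\langle\ell,r\rangle$, whose intersection with $Q$ is precisely the two lines of the degenerate fibre over $r$; hence $X$ is singular exactly along those two lines, and a suitable $\SO(3)$-conjugate of $\lambda_{0,1}$ adapted to $r$ carries $c$ into its weight-$0$ part in $\langle x_2^3,x_1x_2x_3\rangle$, so the orbit is not closed and its closure meets the (closed) Type $(\xi)$ orbit, giving (2). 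In all remaining configurations the weight analysis shows $c$ is balanced, and Luna's criterion returns a closed orbit, which is the last assertion.

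The main obstacle is the exact $\SO(3)$-GIT of plane cubics relative to $\Sigma$. This is genuinely a computation on the full $10$-dimensional representation $\mathrm{Sym}^3\langle x_1,x_2,x_3\rangle$ and cannot be reduced to the binary sextic $C\cap\Sigma$ alone: two cubics differing by $\Sigma\cdot(\text{linear})$ have the same restriction to $\Sigma$, so the non-reduced and reducible cubics (in particular the locus $\Sigma\subset C$, where $c=\Sigma\cdot l$) must be treated through the weight data directly, and it is exactly there that the boundary between cases (1) and (2) and the closed locus has to be drawn with care. A secondary point is to confirm that Luna's criterion transports not merely closedness but also semistability and instability from $\SO(5)(\CC)$ to the centraliser $\SO(3)$; I would justify this using that $X$ is a weight-$0$ fixed point of the one-parameter subgroup $H$.
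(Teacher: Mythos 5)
Your overall strategy is the same as the paper's: both arguments fix the one-parameter subgroup $H=\{\mathrm{diag}(t,1,1,1,t^{-1})\}$ stabilizing Type $(\alpha)$, identify its centralizer as $SO(2)(\CC)\times SO(3)(\CC)$ acting through the $SO(3)$-factor on $V^{H}=\langle x_1^{d_1}x_2^{d_2}x_3^{d_3}\rangle$, invoke Luna's criterion, and finish with Hilbert--Mumford for the torus $\mathrm{diag}(t^a,1,t^{-a})$, whose weight on $x_1^{d_1}x_2^{d_2}x_3^{d_3}$ is $a(d_1-d_3)$ (your formula is the correct one; the paper's $a(d_1-d_2)$ is a typo). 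The paper stops at ``the assertion follows easily from the Hilbert--Mumford criterion,'' so your additions --- the corank-$3$ computation at $p_0$ and $p_4$, the conic-bundle picture, the identification of the weight-zero space with Type $(\xi)$, and the observation that Luna must also be made to transport instability and semistability --- are genuine elaborations of the same route rather than a different one.

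There is, however, a concrete gap in your dictionary for case (2) that your closing caveat does not fully cover. You assert that if $C$ has a node at $r\in\Sigma$ then a conjugate of $\lambda_{0,1}$ carries $c$ into $M_{\leq 0}(\lambda)$, so the orbit is not closed. The space $M_{\leq 0}$ for $\mathrm{diag}(t,1,t^{-1})$ is spanned by $x_2^3,\ x_1x_2x_3,\ x_1x_3^2,\ x_2^2x_3,\ x_2x_3^2,\ x_3^3$, and every cubic in it is singular at $[1{:}0{:}0]$ with tangent cone divisible by $x_3$, i.e.\ with a branch tangent to $\Sigma$ there (or a triple point). Tangency to $\Sigma$ is $SO(3)$-invariant, so a cubic such as $c=x_1(x_2^2-x_3^2)+x_3^3$ --- nodal at $[1{:}0{:}0]\in\Sigma$ with neither branch tangent to $\Sigma$, and smooth elsewhere --- lies in no $SO(3)$-translate of $M_{\leq 0}$, hence admits no one-parameter degeneration at all and has a closed orbit, even though the corresponding $X$ is singular along the two lines $Q\cap\{x_2=x_3=0\}$. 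So ``node on $\Sigma$'' (equivalently ``$X$ singular along two lines'') is not sufficient for a non-closed orbit: one needs the extra branch condition along $\Sigma$, and the boundary between cases (1), (2) and the closed locus has to be drawn with that refinement. This imprecision is arguably already present in the Proposition as stated, but a complete proof cannot pass over it.
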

\begin{proof} 
 The stabilizer of Type $(\alpha)$ contains a 1-PS: 
$$H_1=\{diag(t,1,1,1,t^{-1})|~t\in \CC^\ast\}. $$ 
The center $C_G(H_1)\cong SO(Q_1)(\CC)\times SO(Q_2)(\CC)$, where $Q_1=x_0x_4$ and $Q_2=x_1x_3+x_2^2$.  The group $SO(Q_1)(\CC)\cong SO(2;\CC)$ acts linearly on variable $x_0,x_4$,  while $SO(Q_2)(\CC)\cong SO(3)(\CC)$ acts linearly on the variables  $x_1,x_2$ and $x_3$. 

The action of $C_G(H_1)$ on $V^{H_1}=\left<x_1^{d_1}x_2^{d_2}x_3^{d_3}, \sum\limits_{k=1}^3 d_k =3\right>\subset V$ is equivalent to the action of $SO(Q_2)(\CC)$ on the set of cubic polynomials in three variables $x_1,x_2,x_3$ preserving the quadratic form $Q_2$.  By Luna's criterion, we can reduce our problem to a simpler GIT question $V^{H_1}/\!\!/SO(3)(\CC)$.   
Any $1$-PS $\lambda: \CC^\ast\rightarrow SO(Q_2)(\CC)$ of $SO(Q_2)(\CC)$ can be diagonalized in the form \begin{equation}\label{eq4.13}\lambda(t)= diag(t^a,1,t^{-a}).\end{equation} The weight of a monomial $x_1^{d_1}x_2^{d_2}x_3^{d_3}$ with respect to \eqref{eq4.13} is $a(d_1-d_3)$. Then our assertion follows easily from the Hilbert-Mumford criterion.
\end{proof}

The remaining cases can be shown in a similar way. Here we omit the proof.
\begin{proposition}\label{beta} Let  $X$ be a  surface of  type $(\beta)$. Then it is a union of  a quadric surface  and a complete intersection of two quadrics. Moreover, we have
\begin{enumerate}[(i)]
\item $X$ is unstable if $X$ consists of two quadric cones and a quadric surface intersecting at a line.
\item The orbit of  $X$ is not closed if its equation can be written as $ \lambda_1 x_2^3+\lambda_2 x_1x_2x_3+\lambda_3 x_1x_2x_4$ up to a coordinate transform preserving $Q$. It degenerates to type $(\xi)$.
\end{enumerate}
Otherwise, $X$ is semistable with closed orbit.  
\end{proposition}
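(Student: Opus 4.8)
The plan is to mimic the argument for Type $(\alpha)$: reduce to a small GIT problem by Luna's criterion and then read off the geometry from a pencil of quadrics.

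The common feature behind every assertion is a factorization. Each of the four monomials in Type $(\beta)$ is divisible by $x_2$, so the defining cubic is $x_2\cdot q$ with
$$q=\lambda_1 x_2^2+\lambda_3 x_0x_3+\lambda_2 x_1x_3+\lambda_4 x_1x_4,$$
and therefore $Y=\{x_2=0\}\cup\{q=0\}$. Intersecting with $Q$ gives $X=X_1\cup X_2$, where $X_1=Q\cap\{x_2=0\}$ is a quadric surface (a hyperplane section of $Q$) and $X_2=Q\cap\{q=0\}$ is a complete intersection of two quadrics. This is the first assertion, and it shows that the remaining statements concern only the degeneration of $X_2$ and how its components meet $X_1$.

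For the stability statements I would set up Luna's criterion exactly as before. A general Type $(\beta)$ point is fixed by the one-parameter subgroup $H=\{\mathrm{diag}(t,t,1,t^{-1},t^{-1})\}$, since Type $(\beta)$ is precisely the $\lambda_{1,1}$-invariant part of $V$; its centralizer is the Levi subgroup $C_G(H)\cong GL_2(\CC)$ acting on $\langle x_0,x_1\rangle$, by the contragredient representation on $\langle x_3,x_4\rangle$ (so as to preserve $Q$), and trivially on $x_2$. By Luna's criterion the $SO(5)(\CC)$-orbit of $X$ is closed if and only if the $C_G(H)$-orbit inside $V^H=x_2\cdot\langle x_2^2,x_0x_3,x_1x_3,x_1x_4\rangle$ is closed, so everything reduces to this four-dimensional representation.

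The key computation is the pencil spanned by $Q$ and $q$. One finds
$$\det(sQ+tq)=\frac{1}{16}\,(s+\lambda_1 t)\,(s^2+\lambda_2 st-\lambda_3\lambda_4 t^2)^2,$$
the quadratic factor occurring to the second power because Type $(\beta)$ is a boundary stratum; equivalently, the traceless part $N_0$ of $M_Q^{-1}M_q$ on the rank-four block complementary to the $x_2$-eigenline is governed by the discriminant $\lambda_2^2+4\lambda_3\lambda_4$. Combining the Hilbert--Mumford criterion for the one-parameter subgroups $\lambda_{u,v}$ (whose weights on the four monomials of $V^H$ are $0,0,u-v,v-u$) with the closed-orbit structure of the $GL_2$-conjugation action, I expect the trichotomy: the orbit is closed when the quadratic factor has distinct roots ($\lambda_2^2+4\lambda_3\lambda_4\neq 0$, i.e. $N_0$ regular semisimple); it is not closed, and its minimal degeneration is Type $(\xi)$, when that factor is a perfect square whose root differs from $-\lambda_1$ ($N_0$ nilpotent nonzero); and the point is unstable exactly when all five roots coincide.

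It then remains to translate these numerical conditions into the stated geometry, which I expect to be the main obstacle. When the discriminant quintic has a single fivefold root, $q$ is a product of two hyperplanes tangent to $Q$, so $X_2$ splits into two quadric cones which, together with $X_1$, meet along a common line; a destabilizing one-parameter subgroup is then visible from the monomials, giving the instability in case (i). In the perfect-square case with $-\lambda_1$ separate, $q$ has rank three and, after normalizing by $C_G(H)$, the equation takes the form in (ii); the Hilbert--Mumford limit along $\lambda_{u,v}$ computed above exhibits the degeneration to Type $(\xi)$. In all remaining cases $N_0$ is regular semisimple, the orbit is closed, and $X_2$ is an irreducible (at worst nodal) complete intersection of two quadrics, which is the ``otherwise'' clause. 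The delicate points are separating cases (i) and (ii) --- both have the quadratic factor degenerate, and they are distinguished only by whether the simple root of the pencil meets the quadruple one (equivalently, whether the $x_2^2$-term survives modulo $Q$) --- and keeping track of the changes of coordinate in $C_G(H)$ together with the relations modulo $Q$ that link the coefficient $\lambda_1$ to the trace of $M_Q^{-1}M_q$.
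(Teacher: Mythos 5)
The paper actually omits the proof of this proposition (``the remaining cases can be shown in a similar way''), and your proposal carries out exactly the prescribed strategy: Luna's criterion with $H=\lambda_{1,1}$, centralizer $C_G(H)\cong GL_2(\CC)$ acting by conjugation on $V^H\cong x_2\cdot\mathfrak{gl}_2$ (modulo the relation $x_2Q\equiv 0$), and the determinant $\det(sQ+tq)=\tfrac{1}{16}(s+\lambda_1t)(s^2+\lambda_2st-\lambda_3\lambda_4t^2)^2$ checks out, so your trichotomy (closed $\Leftrightarrow$ traceless part semisimple; non-closed $\Leftrightarrow$ nilpotent nonzero; unstable $\Leftrightarrow$ additionally traceless, i.e.\ all five roots coincide) is correct and yields the stated geometry. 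Two remarks: the splitting into two tangent hyperplanes in the unstable case is a property of the rank-two member of the pencil $\langle Q,q\rangle$, not of $q$ itself; and your normal form for the non-closed case forces $\lambda_2=0$ in item (ii), which is in fact a necessary sharpening of the proposition's literal wording, since for $\lambda_2\neq0$ the matrix $\left(\begin{smallmatrix}-\lambda_1 & 0\\ \lambda_3 & \lambda_2-\lambda_1\end{smallmatrix}\right)$ is regular semisimple and the orbit is already closed (conjugate into type $(\xi)$).
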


\begin{proposition} \label{gamma}
A general member $X$ of type $(\gamma)$ has two simple elliptic singularities of type $\tilde{E}_8$. Moreover, we have
 \begin{enumerate}[(i)]
\item $X$ is unstable if $X$ consists of three quadric cones.
\item The orbit of $X$ is not closed if its equation has the form  $ \lambda_1 x_2^3+\lambda_2 x_1x_2x_3+\lambda_3 x_1^2x_4$ up to a coordinate change preserving $Q$.
\end{enumerate}
Otherwise, $X$ is semistable with closed orbit.  
\end{proposition}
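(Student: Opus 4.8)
The plan is to follow the same Luna-theoretic template used for Types $(\alpha)$ and $(\beta)$. A general surface of Type $(\gamma)$ is fixed by the one-parameter subgroup $H=\{\mathrm{diag}(t^2,t,1,t^{-1},t^{-2})\mid t\in\CC^\ast\}$, namely the normalized 1-PS $\lambda_{2,1}$ that cut out Type $(\gamma)$ in the first place, and $H$ is reductive. Luna's criterion therefore reduces closedness of the $G=SO(Q)(\CC)$-orbit to closedness of the $C_G(H)$-orbit. Exactly as in the Type $(\xi)$ lemma, $C_G(H)$ is the maximal torus $T=\{\mathrm{diag}(a_0,a_1,1,a_1^{-1},a_0^{-1})\}$, acting on $V^H=\langle x_2^3,\,x_1x_2x_3,\,x_0x_3^2,\,x_1^2x_4\rangle$. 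So the entire statement becomes a rank-two torus GIT problem on $\PP(V^H)=\PP^3$.

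First I would record the $T$-weights of the four monomials using the 1-PS weight computation above. The monomials $x_2^3$ and $x_1x_2x_3$ are $T$-invariant (weight $0$), while $x_0x_3^2$ and $x_1^2x_4$ carry opposite nonzero weights $\pm w$, with $w=(1,-2)$ in the evident basis of the character lattice of $T$. For a point of $\PP(V^H)$ with coefficient support $S$, the Hilbert-Mumford criterion for a torus reads: semistable iff $0\in\mathrm{Conv}\{w_i:i\in S\}$, unstable iff $0\notin\mathrm{Conv}\{w_i:i\in S\}$, and (among semistable points) closed orbit iff $0$ lies in the relative interior. Since the four weights are $0,0,w,-w$ and hence collinear, these are immediate to evaluate: a general member, with all coefficients nonzero so that $S$ contains both $\pm w$, has $0$ in the relative interior of the segment $[-w,w]$ and is thus semistable with closed orbit, which is the ``otherwise'' clause.

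Next I would translate the extremal support configurations into the stated geometry. Instability forces $S=\{x_0x_3^2\}$ or $S=\{x_1^2x_4\}$; for $Y=\{x_0x_3^2=0\}$ the surface $X=Q\cap Y$ is the union of the hyperplane sections $Q\cap\{x_0=0\}$ and $Q\cap\{x_3=0\}$, the latter with multiplicity two, and each such section is a rank-three quadric, that is, a quadric cone---three quadric cones in total, giving (i). The non-closed configurations are $S=\{x_2^3,x_1x_2x_3,x_1^2x_4\}$ and its mirror $S=\{x_2^3,x_1x_2x_3,x_0x_3^2\}$, where $0$ is an endpoint of the weight segment; choosing the 1-PS in $T$ that kills the single outer coefficient sends the limit $\lim_{t\to0}$ to $\lambda_1x_2^3+\lambda_2x_1x_2x_3=x_2(\lambda_1x_2^2+\lambda_2x_1x_3)$, a surface of Type $(\xi)$ with closed orbit, so the original orbit is not closed; this is (ii). The claim that a general Type $(\gamma)$ surface has two $\tilde{E}_8$ singularities then follows from Remark \ref{rem:1}, which identifies $p_0=[1:0:0:0:0]$ as an $\tilde{E}_8$ point, together with the involution $x_i\leftrightarrow x_{4-i}$, which preserves $Q$ and Type $(\gamma)$ (interchanging $x_0x_3^2$ and $x_1^2x_4$) and carries the identical local picture to the second fixed point $p_4=[0:0:0:0:1]$.

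The torus computation is routine; the main obstacle is the geometric bookkeeping in the translation step---confirming that the extremal supports genuinely give three quadric cones and the stated degeneration to Type $(\xi)$---together with the local analysis producing the second $\tilde{E}_8$. For the latter one must either repeat the normal-form reduction of Remark \ref{rem:1} directly at $p_4$, where eliminating $x_0$ through $Q$ yields a corank-two germ $\lambda_4 z_1^2+\lambda_1 z_2^3+\cdots$ that one brings to the form $u^2+v^3+w^6+auvw$, or invoke the $x_i\leftrightarrow x_{4-i}$ symmetry directly; I expect the symmetry argument to be the cleaner route.
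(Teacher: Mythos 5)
Your proposal is correct and follows exactly the route the paper intends: the paper omits this proof, stating only that it goes "in a similar way" to the Type $(\alpha)$ case, and your reduction via Luna's criterion using $H=\lambda_{2,1}$ with $C_G(H)$ the maximal torus acting on $V^H=\langle x_2^3,x_1x_2x_3,x_0x_3^2,x_1^2x_4\rangle$ is precisely the setup already established in the paper's Type $(\xi)$ lemma. The weight computation ($0,0,w,-w$), the convex-hull trichotomy, the identification of the unstable locus with $Q\cap\{x_0x_3^2=0\}$ (three quadric cones counted with multiplicity), the degeneration to Type $(\xi)$, and the $x_i\leftrightarrow x_{4-i}$ symmetry producing the second $\tilde{E}_8$ all check out.
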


\section{Stable singular complete intersection}
In this section, we will discuss the stable loci of singular complete intersections of a smooth quadric and a cubic hypersurface. As a result, we  prove Theorem \ref{thmboundary}. 
\subsection{Stable complete intersection with isolated singularity}
Let $X=Q\cap Y$ be a complete intersection of the smooth quadric $Q$ and a cubic threefold $Y$. A first observation is 
\begin{proposition}\label{ss-iso}
 If $X=Q\cap Y$ has only isolated singularity, then $X$ is stable if and only if the non-ADE singularities can only be  one of the following situations
 \begin{enumerate}
     \item [$i)$] $\widetilde{E}_7$ type, 
     \item [$ii)$ ]  $\widetilde{E}_8$ type and its projective tangent cone $\PP(CT_p)$ meet $X$ at a point, the general equation of $Y$ is of the form
 \end{enumerate}
Up to a coordinate change, the general equation of $Y$ for  $Y\cap Q$ with an $\widetilde{E}_7$ singularity is 
     \[(\delta):~~~x_0x_2^2+x_2q(x_1,x_2,x_3,x_4)+x_4q(x_1,x_3,x_4)=0\]
Similarly, the general equation of $Y$ for $Y\cap Q$ with an $\widetilde{E}_8$ singularity of type ii) is 
     \[(\epsilon):~~~x_0x_3^2+x_2x_3\ell(x_1,x_3,x_4) +c(x_1,x_3,x_4)=0\]
     where $\ell$ is linear and $c$ is a cubic polynomial in $x_1,x_3,x_4$. 
\end{proposition}

\begin{proof}
Suppose $X$ has only at worst isolated singularities of type $i)$ or $ii)$. This means that $X$ does not have a corank $3$ singularity or $\widetilde{E}_8$ singularity whose projective tangent cone meets $X$ along a line. By Theorem \ref{thm:stable}, we know that $X$ is stable. 
Conversely, suppose $X$ is stable and it has a non-ADE isolated singularity at $p=[1,0,0,0,0]$. If $p$ is not of type $i)$ or $ii)$, by Theorem \ref{thm:stable} (i),  $p$ has to be a $\widetilde{E}_8$ type singularity. Let us analysis the local equation of $p$. As in the proof in Theorem \ref{thm:stable}, up to a change of coordinates preserving the quadric $Q$, the defining equation of $Y$ has two possibilities: 
\begin{enumerate}
\item  $x_0x_2^2+x_1^2\ell(x_1,x_2,x_3,x_4)+x_1q(x_2,x_3,x_4)+c(x_2,x_3,x_4)=0$
\item $x_0x_3^2+x_1^2\ell(x_1,x_2,x_3,x_4)+x_1q(x_2,x_3,x_4)+c(x_2,x_3,x_4)=0$
\end{enumerate}

In the first case, the affine local equation near $p$ can be written as \[x_2^2+\sum\limits_{6\geq d\geq 3}f_d(x_1,x_2,x_3)=0\]
where $f_d$ is a homogenous polynomial of degree $d$. Note that there is no 
term  $x_1^3x_2, x_3^3x_2$ and $x_1x_3^4, x_1^4x_3$ in the fourth and fifth jet. 
One can easily see that $p$ can not  be a $\widetilde{E}_8$ type singularity. 

In the second case,  we know that the projective tangent cone $\PP(CT_p(X))$ meets $X$ along either a line $L:x_2=x_3=x_4=0$ or the point $p$ (with multiplicity). If the intersection is a line,  $X$ can not be stable by the proof in Theorem \ref{thm:stable}. The only possibility is that the intersection is a point. In this situation, the third jet contains the term $x_1^3$ and the weights on variables $x_1,x_2,x_3$ are $(\frac{1}{3}, \frac{1}{6}, \frac{1}{2})$.   The equation of $Y$ is of the form
\[x_0x_3^2+x_2x_3\ell(x_1,x_3,x_4)+c(x_1,x_3,x_4)=0,\]
where $\ell$ is linear  and $c$ is a cubic in $x_1,x_3,x_4$.

At the end, let us give the general equations for $X$ with an $\widetilde{E}_7$ singularity. Without loss of generality, we assume the singularity is at $p=[1,0,0,0,0]$. As it is corank $2$, the defining equation of $Y$ can be written as 
\[ x_0x_2^2+x_2q(x_1,x_2,x_3,x_4)+c(x_1,x_3,x_4)=0\]
or  \[ x_0x_3^2+x_3q'(x_1,x_2,x_3,x_4)+c'(x_1,x_2,x_4)=0\]
The weights on $(x_1,x_2,x_3)$ are either  $(\frac{1}{4}, \frac{1}{2}, \frac{1}{4})$ or $(\frac{1}{4},\frac{1}{4},\frac{1}{2})$. In either case, the direct computation shows that all monomials in $c(x_1,x_3,x_4)$ and $c'(x_1,x_2,x_4)$ must have $x_4$ term. The assertions follows. 
\end{proof}

From the proof, we can see that if $X$ has only isolated singularities of $\widetilde{E}_8$ type, then $X$ will be stable if the projective tangent cone $\PP(CT_p(X))$ of the singularity meets $X$ at a point and $X$ is strictly semistable if $\PP(CT_p(X))$ meets $X$ along a line. 

\subsection{Stable loci of complete intersection with non-isolated singularity}Let us consider the non-normal case. 
With the notations as above, we denote by  $\mathrm{Sing}(X)$  the singular loci of $X$. Then we have 
\begin{theorem}\label{ss-niso}
 Let $X$ be a complete intersection of a smooth quadric $Q$ and a cubic hypersurface $Y$ with non-isolated singularities. Then one of the following holds:
 \begin{enumerate}[i)]
     \item $\mathrm{Sing}(X)$ contains a line. The general equations of such $X$ are of the form
\[(\zeta): ~~~~x_0x_4+x_1x_3+x_2^2=x_3q_1(x_0,x_1,x_2)+x_4q_2(x_1,x_2)+c(x_0,x_1,x_2)=0\]
where $q_1,q_2$ are quadrics and $c$ is a cubic.
     \item $\mathrm{Sing}(X)$ contains a conic. The general equations of such $X$ are of the form
\[(\eta):~~~~x_0x_4+x_1x_3+x_2^2=x_0^2\ell_1+x_0x_4\ell_2+x_4^2\ell_3=0\]
where $\ell_i$ are linear polynomials in $x_0,\ldots,x_4$.
     \item $\mathrm{Sing}(X)$ contains a twisted cubic. The general equations of $X$ are of the form
\[(\theta): ~~~~\begin{aligned}
&Q:\begin{vmatrix}
x_{0} & x_1& x_2   \\
x_{1} & x_2&x_3  \\
a_1& a_2& a_3 
\end{vmatrix}+x_4\ell(x_0,x_1,x_2,x_4)=0\\ & Y:\begin{vmatrix}
x_{0} & x_1& x_2   \\
x_{1} & x_2&x_3  \\
b_1x_4& b_2x_4+a_1x_1+a_2x_0& b_3x_4+a_1x_2+a_2x_1+a_3x_0
\end{vmatrix}\\ &~~+x_4x_0\ell(x_1,x_2,x_3,x_4)+x_4^2\ell'=0\end{aligned}\] 
where $a_i,b_i\in\CC, \ell'$ is a linear polynomial in $x_0,\ldots,x_4$ and $\ell$ represents a linear polynomial in four variables. 
     \item $\mathrm{Sing}(X)$ contains an elliptic curve of degree four. The general equations of such $X$ are of the form
\[(\widetilde{\beta}):~~~ x_0x_4+x_1x_3+x_2^2=\ell q=0,\] 
where  $\ell$ is a linear polynomial and $q$ is a quadric polynomial. 
     \item $\mathrm{Sing}(X)$ contains a rational normal curve of degree four.  
     The general equations of such $X$ are of the form
\[(\phi): x_0x_4+x_1x_3-2x_2^2=\sum\limits_{i=2}^6 \ell_i \Delta_i=0\] 
where $\ell_i$ are linear polynomials in \eqref{linear} and $\Delta_i$ are quadric polynomials defined in \eqref{quartic}. 
\end{enumerate}
Moreover, the general members of each type is stable.  
\end{theorem}
\begin{proof}
Let $C\subseteq \mathrm{Sing}(X)$ be an irreducible curve.  If $X=X_1\cup X_2$ is reducible, then $\deg(X_i)=2$ or $4$ as $X_i$ is contained in a smooth quadric threefold $Q$. The only possibility is $X=Q\cap Y$ with $Y$ a union of a $\PP^3$ and a quadric threefold. This is exactly type $\widetilde{\beta}$. 

If $X$ is irreducible, take a general hyperplane $H$, then $H\cap X$ is an irreducible curve singular along $H\cap C$. Note that the arithmetic genus of $H$ is at most $4$,  $H\cap C$ has at most $4$  points. It follows that the degree of $C$ is at most $4$.  Hence $C$ can be a line, a conic, a plane cubic, a twisted cubic or a rational normal curve of degree $4$. If $C$ is a plane cubic, then $C$ is contained in the intersection $\PP^2\cap Q$, which is a conic. This is clearly impossible. Let us now describe their equations of  case by case. 

i) Take the quadric threefold $Q:x_0x_4+x_1x_3+x_2^2=0$ and  we can assume  $X$ is singular along the line $C:x_0=x_1=x_2=0$. The equation of $Y$ is of the form 
\begin{equation}\label{NLline} f=x_3^2\ell_1+x_3x_4\ell_2+x_4^2\ell_3+x_3q_1+x_4q_2+c=0
\end{equation}
where $\ell_i$ are linear, $q_i$ are quadric and $c$ is a cubic polynomial in $x_0,x_1$ and $x_2$. Then the Jacobian of $X$ on the line $C$  given by
\[ \begin{pmatrix}
x_4 & x_3& 0&0 &0 \\
\frac{\partial f}{\partial x_0} &  \frac{\partial f}{\partial x_1}  & \frac{\partial f}{\partial x_2}  & 0&0
\end{pmatrix} \]
has rank one. The only possibility is that all $\ell_i=0$. This gives the equation $(\xi).$
\\

ii) Take the quadric as above and we assume that $X$ is singular along a smooth conic $C:x_0=x_4=x_1x_3+x_2^2=0$.  The equation of a cubic hypersurface $Y$ containing $C$ is of the form
\begin{equation}\label{conic}
    x_0q_1+x_4q_2+(x_1x_3+x_2^2)\ell(x_1,x_2,x_3)=0
\end{equation}
for some quadric polynomials $q_1$ and $q_2$. Similarly as i), one can compute  that the equation of $Y$ is of type $(\eta)$.
\\

iii) Take $Q$ as above. If $C$ is an elliptic curve of degree $4$, the span of $C$ is a three dimension linear subspace, denoted by $H$. Note that $H\cap X$ can not be a curve as $C$ is contained in $H\cap X$ with multiplicity at least $2$. This means $H\cap X$ is a surface and thus $X$ is reducible. Such $X$ is of type $(\widetilde{\beta})$.
\\

iv) If $C$ is a twisted cubic, the defining equations of $C$ can be written as 
\[x_4=0, x_1x_3-x_2^2=0 , x_1x_2-x_0x_3=0, x_0x_2-x_1^2=0.\]
Then the equations of a complete intersection $X$ containing  $C$ can be written as 
\begin{equation}\label{twcubic}
    \begin{aligned}
   Q&:  \begin{vmatrix}
x_{0} & x_1& x_2   \\
x_{1} & x_2&x_3  \\
a_1& a_2& a_3 
\end{vmatrix} +x_4\ell=0
   \\ Y&: \begin{vmatrix}
x_{0} & x_1& x_2   \\
x_{1} & x_2&x_3  \\
\ell_1& \ell_2&\ell_3 
\end{vmatrix} +x_4q(x_0,x_1,x_2,x_3)+x_4^2\ell'=0.
    \end{aligned}
\end{equation}
for some linear polynomials $\ell_i, \ell,\ell'$ and a quadric polynomial $q$. If $X$ is singular along $C$, then via computing the Jacobian of equations \eqref{twcubic} , we get that the equation $(\theta)$.

vi) Without loss of generality, we can assume the curve $C$ is defined by the equations
\begin{equation}\label{quartic}
\begin{aligned}
\Delta_1=x_1x_3-x_2^2, \Delta_2=x_1x_2-x_0x_3, \Delta_3=x_0x_2-x_1^2;\\
\Delta_4=x_2x_4-x_3^2, \Delta_5=x_2x_3-x_1x_4, \Delta_6=x_1x_3-x_0x_4.
\end{aligned}
\end{equation}
and  $Q$ is defined by the equation $2\Delta_1-\Delta_6=0$. As $Y$ contains $C$, we may assume the equation of  $Y$ is given by
\begin{equation}\label{NLquartic}
   \sum\limits_{2}^6\ell_i\Delta_i=0
\end{equation}
for some linear polynomial $\ell_i$. As $X$ is singular along $C$,  the Jacobian matrix of $X$ along $C$ is 
\[\begin{pmatrix}
x_{4} & x_3& -4x_2& x_1&x_0   \\
\sum\limits_{i} \ell_i \frac{\partial \Delta_i}{\partial x_0} & \ldots & \ldots & \ldots & \sum\limits_i \ell_i \frac{\partial \Delta_i}{\partial x_4}
\end{pmatrix}\]
Then via a computation, one can get the linear functions $\ell_i$  are of the form
\begin{equation}\label{linear}
\begin{aligned}
& \ell_4=\sum\limits_{i=0}^2 a_i x_i, 
\ell_5=\sum\limits_{i=0}^2 b_ix_i+a_2x_3,\ell_2=\sum\limits_{i=1}^3 c_i x_i+(b_2-a_1)x_4 \\
&\ell_3=c_1x_2+(c_2-b_0)x_3+(c_3+a_0-b_1)x_4, \\
&\ell_6=\frac{1}{2}(c_1x_0+(c_2-2b_0)x_1+(c_3-2b_1+3a_0)x_2+(2a_1-b_2)x_3+a_2x_4). 
\end{aligned}
\end{equation}
There are nine parameters $a_i,b_i$ for $i=0,1,2$ and $c_j$ for $j=1,2,3$. We left the details to readers.

Finally, the assertion of stability follows directly from Theorem \ref{thm:stable}. 
\end{proof}

\noindent {\bf Proof of Theorem \ref{thmboundary}}. It basically follows from the  combination of  Proposition \ref{alpha}-\ref{gamma}, Proposition \ref{ss-iso} and Theorem \ref{ss-niso}. The strata $\alpha, \beta,\gamma$ are strictly semistable, which are  described in Proposition \ref{alpha}, Proposition \ref{beta} and Proposition \ref{gamma} respectively. The dimension of these components can be computed via Luna's slice theorem as below:

With the notations as in  Proposition \ref{alpha}-\ref{gamma}, we have 
\begin{equation}
\begin{aligned}
\dim (\alpha)&=\dim \PP(V^{H_1})\q SO(3)(\CC)=6
\\
\dim (\beta)&= \dim \PP(V^{H_2})\q \CC^\ast =2
\\
\dim (\gamma)&=\dim (\PP(V^{H_3})\q \CC^\ast=2
\end{aligned}
\end{equation}
where $V^{H_2}=\left<x_2^3, x_1x_2x_3,x_0x_2x_3,x_1x_2x_4\right>$ and $V^{H_3}=\left<x_2^3,x_1x_2x_3,x_0x_3^2,x_1^2x_4\right>$  parameterizing the equations of type $\beta$ and $\gamma$ respectively. 
For stable components, we can also compute the dimension as follows:
\begin{enumerate}
    \item For $\zeta$, note that the  NL-divisor $D^6_{1,0}$ parametrizing  $X$ containing a line has dimension $18$. The general member in $D^6_{1,0}$  is of the form \eqref{NLline}. Thus one can see that $\zeta$ has codimension $7$ in $D^6_{1,0}$  and it follows $\dim \zeta=11$.
    \item Similarly, the general equations of element in $D^6_{2,0}, D^6_{3,0}$ and $D^6_{4,0}$ are given in \eqref{conic}, \eqref{twcubic} and \eqref{NLquartic} respectively. Then one can directly see that  $\eta$ has codimension $7$ in $D^6_{2,0}$, while $\theta$ has codimension $15$ in $D^6_{3,0}$ and $\phi$ has codimension $16$ in $D^6_{4,0}$. 
    \item Let us consider $\widetilde{\beta}$ consisting of the union of a $\PP^2$ and a complete intersection of two quadrics meeting along a degree $4$ curve in $\PP^3$.  The element in $\widetilde{\beta}$ are parameterized by  the product of two projective spaces $\PP(V_1)\times \PP(V_2)$, where $V_1=H^0(Q,\cO_Q(1))$ and $V_2=H^0(Q,\cO_Q(2))$. Hence its dimension is 
      $$\dim \PP(V_1)+\dim \PP(V_2)-\dim SO(5)(\CC)=7.$$
      \item For $\delta$, it can be viewed as the quotient space $\PP(V)/G_1$, where $V$ is the vector space spanned by monomials in the equation $(\delta)$ and $G_1$ is the subgroup of $SO(5)(\CC)$ fixing the singular point $p_0$ and the hyperplane $x_2=0$. As  $\dim V=17$ and $\dim G_1=5$, we get $\dim \delta=11$. 
      \item Similar as above, $\epsilon$ is the quotient space $\PP(V')/G_2$ with $\dim V'=14$ and $\dim G_2=5$. It follows that $\dim \epsilon=8$. 
\end{enumerate}

\end{section}

\begin{section}{Complete intersection of three quadrics in $\PP^5$}
Let $W= H^0(\PP^5,\cO_{\PP^5} (2))$ be the space of global sections of $\cO_{\PP^5}(2)$.  Since every complete intersection $X$ is determined by a net of quadrics $Q_1,Q_2,Q_3$, the complete intersection of three quadrics are parametrized by the Grassmannian $Gr(3,W)$. The moduli space of complete intersections can be constructed as the GIT quotient $Gr(3,W)^{ss}/\!\!/SL_6(\CC)$. In this situation, the complete GIT strata is very complicated. For example, see \cite{DM12} for the GIT stability of a net of quadrics in $\PP^4$. However, we are satisfied with the following result:

\begin{theorem}\label{thm:ade-ss}
Let $X$ be a complete intersection of three quadrics in $\PP^5$. If $X$ has at worst simple singularities, then $X$ is GIT stable.
\end{theorem}

\subsection{Set up} We first make some notations. Given a net of quadrics $\{Q_1,Q_2,Q_3\}$, the Pl\uu cker coordinates of $\{Q_1,Q_2,Q_3\}$ in $\PP(\bigwedge^3 W)$ can be represented by 
$$\{x_{i_1}x_{j_1}\wedge x_{i_2}x_{j_2}\wedge x_{i_3}x_{j_3}\}$$ 
 for three distinct pairs $(i_k,j_k)$.
 
Let $\lambda:\CC^\ast \rightarrow SL_6(\CC)$ be a {\it normalized} one-parameter subgroup, i.e. $\lambda(t)=\hbox{diag}(t^{a_0},t^{a_1}\ldots, t^{a_5})$ satisfying
$ a_0\geq a_1\ldots \geq a_5$ and $\sum\limits_{i=0}^5 a_i=0$.  We denote by 
$$w_\lambda(x_ix_j):=a_i+a_j$$  
the weight of the monomial $x_ix_j$ with  respect to $\lambda$. The weight of a Pl\uu cker coordinate 
$x_{i_1}x_{j_1}\wedge x_{i_2}x_{j_2}\wedge x_{i_3}x_{j_3}$
 with respect to $\lambda$ is simply
$\sum\limits_{k=1}^3 w_\lambda(x_{i_k}x_{j_k})$. 

\subsection{Numerical Criterion for Nets}
By the Hilbert-Mumford numerical criterion, a net of quadrics $\{Q_1,Q_2,Q_3\}$ is not properly stable if and only if for a suitable choice of coordinates, there exists a normalized 1-PS $\lambda:t\rightarrow \hbox{diag}(t^{a_0},t^{a_1}\ldots, t^{a_5})$ such that the weight of all Pl\uu cker coordinates of $\{Q_1,Q_2,Q_3\}$ with respect to $\lambda$ is not positive. We say that $\{Q_1,Q_2,Q_3\}$ is not properly stable with respect to $\lambda$.

Given a normalized 1-PS $\lambda:\CC^\ast \to SL_6(\CC)$, we can define two complete orders on quadratic monomials:
\begin{enumerate}
\item $``>":x_0^2>x_0 x_1>\ldots >x_0x_5>x_1^2>x_1x_2>\ldots >x_4x_5>x_5^2$.
\item $``>_\lambda":x_ix_j>_\lambda x_kx_l$ if either $w_\lambda(x_ix_j)>w_\lambda(x_kx_l)$ or   $w_\lambda(x_ix_j)=w_\lambda(x_kx_l)$ for a given normalized 1-PS:$\lambda$ and $x_ix_j>x_kx_l$.
\end{enumerate}
Since the 1-PS $\lambda:\CC^\ast\rightarrow SL_6(\CC) $ is normalized,  $x_ix_j>_\lambda x_kx_l$ implies  $\max\{i,j\}>\min\{k,l\}$.

We denote by $m_i$  the leading term of $Q_i$ with respect to the order $``>_\lambda"$ and we say that a monomial $x_kx_l\notin Q_i$ if the quadratic polynomial $Q_i$ does not contain $x_kx_l$ term. 
 Moreover, we can always set 
 \begin{equation}\label{eq:4.0}
 m_1>_\lambda m_2>_\lambda m_3,
 \end{equation}
up to replacing $Q_1,Q_2,Q_3$ with a linear combination of the three polynomials. Then the term $m_1\wedge m_2\wedge m_3$ appears in the Pl\uu cker coordinates of $Q_1\wedge Q_2\wedge Q_3$ and has the largest weight with respect to $\lambda$.  Hence the net $\{Q_1,Q_2,Q_3\}$ is not properly stable with respect to $\lambda$ if and only if $w_\lambda(m_1\wedge m_2\wedge m_3)\leq 0$.

\begin{lemma}\label{lem:6}  With the notation above, let $X$ be the complete intersection $Q_1\cap Q_2\cap Q_3$. Then
$X$ has a singularity with multiplicity greater than two if  one of the following conditions does not hold:

\begin{enumerate}[(1)]
\item $m_1\geq_\lambda x_0x_4$,

\item $m_2\geq_\lambda x_1x_5$ if $m_1=x_0^2$, and $m_2\geq_\lambda x_0x_5$ otherwise,

\item $m_3 \geq_\lambda x_3^2$ if $m_1<_\lambda x_0x_3$.
\end{enumerate}
Moreover, $X$ is singular along a curve if one of the following conditions does not hold:

\begin{enumerate}[(1')]

\item $m_1\geq_\lambda x_1^2$  if $m_3<_\lambda x_1x_5$;  or  $ m_1\geq_\lambda \max\{x_1x_3,x_2^2\}$ if $m_2<_\lambda x_1x_4$;

\item $m_2\geq_\lambda x_2^2$ if $m_3<_\lambda x_2x_5$; $m_2\geq_\lambda \max \{x_1x_4,x_3^2\}$ if $m_1<_\lambda x_1^2$; and  $m_2\geq_\lambda \max \{x_2x_4,x_3^2\}$ otherwise;

\item $m_3\geq_\lambda \max\{x_3x_5,x_4^2\}$.
\end{enumerate}
\end{lemma}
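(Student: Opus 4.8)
\emph{Overall strategy.} The plan is to prove the contrapositive of each clause separately: if a single one of the displayed inequalities fails, a prescribed block of quadratic monomials is forced to vanish in all three defining quadrics, and this vanishing makes $X$ acquire a point of multiplicity $>2$ (for (1)--(3)) or a one-dimensional singular locus (for (1$'$)--(3$'$)). The mechanism that converts a single inequality into a block of absent monomials is the normalization $a_0\ge a_1\ge\cdots\ge a_5$ of $\lambda$. It makes the weight $w_\lambda(x_ix_j)=a_i+a_j$ monotone: writing indices in increasing order, $i'\le i$ and $j'\le j$ give simultaneously $w_\lambda(x_{i'}x_{j'})\ge w_\lambda(x_ix_j)$ and $x_{i'}x_{j'}\ge x_ix_j$ in the plain order, hence $x_{i'}x_{j'}\ge_\lambda x_ix_j$. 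Since $m_1>_\lambda m_2>_\lambda m_3$, an inequality $m_r<_\lambda x_kx_l$ is therefore equivalent to the assertion that $Q_r$ (and every $Q_s$ with $s\ge r$) omits every monomial dominating $x_kx_l$ coordinatewise. The first step is to record this dictionary once, reading off from each threshold the exact up-set of monomials it kills.

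\emph{Multiplicity (conditions (1)--(3)).} Each of these is localized at a coordinate vertex. If (1) fails, the monomials $x_0^2,x_0x_1,x_0x_2,x_0x_3,x_0x_4$ disappear from all three quadrics, so in the chart $y_j=x_j/x_0$ at $e_0$ every equation has the shape $c_ry_5+q_r(y_1,\dots,y_5)$ with $q_r$ a quadratic form, and the three differentials at $e_0$ are all proportional to $dy_5$. If (2) fails with $m_1\ne x_0^2$, the two smaller quadrics lose all of their $x_0$-monomials and again only one differential survives at $e_0$; if (2) fails with $m_1=x_0^2$, then $Q_2,Q_3\in\CC[x_2,\dots,x_5]$ and one localizes instead at a point of the vertex line $\overline{e_0e_1}$, where $\bar Q_2,\bar Q_3$ vanish to order two and do not involve the coordinate along the line. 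In all these cases the Zariski tangent space has dimension $\ge 4$; since $X$ is a genuine complete intersection surface, its tangent cone is purely two-dimensional and spans the tangent space, so it is a nondegenerate cone cut out (after reducing the generators) by two quadratic forms meeting properly, and by B\'ezout its degree, which equals $\mathrm{mult}_pX$, is $4>2$. Clause (3) is different: there the removed monomials only collapse the three differentials into a two-dimensional space, so $p$ is a corank-two point and one must instead run the local normal-form computation, exactly as in the $\tilde{E}_8$ analysis of the previous section, to read off the singularity type.

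\emph{One-dimensional singular locus (conditions (1$'$)--(3$'$)).} Here the dictionary produces either a reducible quadric or an entire coordinate line in the singular scheme. The cleanest case is (3$'$): if $m_3<_\lambda x_4^2$, every monomial with both indices $\le 4$ leaves $Q_3$, so $Q_3=x_5\,\ell$ lies in the principal ideal $(x_5)$, and $X$ splits as $(Q_1\cap Q_2\cap\{x_5=0\})\cup(Q_1\cap Q_2\cap\{\ell=0\})$, two surfaces crossing along the curve $Q_1\cap Q_2\cap\{x_5=\ell=0\}$, along which $X$ is singular. The remaining alternatives of (1$'$)--(3$'$) instead strip enough monomials that a coordinate line --- $\overline{e_0e_1}$ or $\overline{e_0e_2}$ depending on which branch of the nested ``otherwise'' one is in --- lies on $X$ with the Jacobian matrix dropping rank identically along it; I would confirm this by differentiating the three quadrics and checking that every $3\times 3$ minor vanishes on the line. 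The nested conditionals in (1$'$)--(3$'$) are exactly the bookkeeping that selects the relevant line and the admissible reach of the remaining leading terms.

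\emph{Main obstacle.} The difficulty is not a single computation but the disciplined case analysis together with two genuineness checks. On the multiplicity side one must make sure the tangent cone really spans the tangent space and is not trapped in a smaller linear subspace by some additional initial form; this is where the hypothesis that $X$ is an honest complete intersection surface, forcing the tangent cone to be two-dimensional, does the essential work, and it is what rules out a stray multiplicity-two point. The genuinely delicate point is clause (3): the corank-two normal form there sits on the borderline between a simple singularity and a worse one, so certifying that the listed monomial vanishing pushes the singularity past multiplicity two requires the full normal-form computation rather than the soft tangent-space count that dispatches (1) and (2). On the curve side one must upgrade set-theoretic containment of a line in $\mathrm{Sing}(X)$ to an identical rank drop of the Jacobian along the whole line, which again forces one to track exactly which mixed monomials survive in each sub-case. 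Assembling the monomial dictionary so that every nested alternative is covered uniformly, instead of re-deriving the absent set by hand each time, is the part that demands the most care.
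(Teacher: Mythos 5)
Your overall mechanism --- converting each failed inequality into an ``up-set'' of absent monomials via the normalization $a_0\ge\cdots\ge a_5$, then localizing at $p_0=[1,0,\dots,0]$ or along a coordinate line and counting orders of vanishing --- is the same as the paper's (whose own proof is itself only a sketch: it names the relevant point or curve and observes that two of the three row-reduced quadrics become singular there). Your handling of (1), (2) and of the reducibility branch of (3$'$) matches the paper; the tangent-cone/B\'ezout detour is more than is needed (the inequality $\mathrm{mult}_{p}(Q_1\cap Q_2\cap Q_3)\ge \prod_i \mathrm{ord}_{p}(Q_i)$ for a dimensionally proper intersection already gives $\ge 4>2$), but it is harmless.

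The genuine gap is clause (3). You correctly note that there the absent monomials only collapse the three differentials at $p_0$ into the two-dimensional space $\langle dy_4,dy_5\rangle$, so the soft count yields a corank-$2$ point, and you then propose to ``run the local normal-form computation to read off the singularity type.'' That proves the wrong statement: a corank-$2$ normal form such as $\tilde E_8$, i.e.\ $u^2+v^3+w^6=0$, has multiplicity exactly $2$, whereas the lemma asserts multiplicity $>2$. What actually forces a triple point in case (3) is the combination of the two failures: $m_3<_\lambda x_3^2$ forces $Q_3\in(x_4,x_5)$, and $m_1<_\lambda x_0x_3$ strips $x_0x_j$ ($j\le 3$) from all three quadrics; in the chart at $p_0$ one uses the first two equations to eliminate $y_4,y_5$ as series of order $\ge 2$ in $y_1,y_2,y_3$, and then the third local equation, lying in the ideal $(y_4,y_5)$ with no surviving linear part, acquires order $\ge 3$ --- its quadratic initial form dies on $V(y_4,y_5)$. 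This elimination step, which is where the multiplicity actually jumps past $2$, is absent from your outline; without it clause (3) is not established (and the degenerate sub-case where $\ell_1,\ell_2$ fail to span $\langle y_4,y_5\rangle$, which reverts to the (1)--(2) count, also needs to be noted). A secondary, smaller gap: in (2$'$) the singular curve is not always a coordinate line or a locus of reducibility. When $Q_2,Q_3$ both lie in $(x_3,x_4,x_5)^2+x_5\cdot(x_0,x_1,x_2)$, the quadric $X$ can be irreducible yet singular along the conic $C_1=\{x_3=x_4=x_5=Q_1=0\}$ (and in (3$'$) along the quartic $C_2=\{x_4=x_5=Q_1=Q_2=0\}$); your dichotomy ``reducible, or singular along $\overline{e_0e_1}$ or $\overline{e_0e_2}$'' does not cover these loci, which the paper lists explicitly.
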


\begin{proof}
Let $p_0$ be the point $[1,0,0,0,0,0]$ in $\PP^5$. For  (1) and (2),  if either $m_1<_\lambda x_0x_4$ or $m_2<_\lambda x_0x_5$ and $m_1<_\lambda x_0^2$,  the surface $X$ contains the point $p_0$ and  two quadrics $Q_2,Q_3$ are both singular at $p_0$. It follows that multiplicity of $p_0$ is greater than $2$.  

If $m_1=x_0^2$ and $m_2<_\lambda x_1x_5$, then $X$ is singular along the two points
 $$\{Q_1=x_2=x_3=x_4=x_5=0\}$$ with multiplicity greater than $2$. Similarly, one can easily check our assertion for (3). 

For (1'), (2') and (3'),  we will only list the singular locus of $X$ and leave the proof to readers:
\begin{itemize}
\item $X$ is singular along the line $L:x_2=x_3=x_4=x_5=0$  if  condition $(1')$ is invalid.

\item $X$ is either reducible or singular along $L$ or $C_1: x_3=x_4=x_5=Q_1=0$ if  condition $(2')$ is invalid.

\item $X$ is either reducible or singular along the curve $C_2:x_4=x_5=Q_1=Q_2=0$ if  condition $(3')$ is invalid.
\end{itemize}
\end{proof}

As before, we need to know the maximal set $M_{\leq 0}(\lambda)$ of  triples  of distinct quadratic monomials $\{q_1,q_2,q_3\}$, whose sum of their weights with respect to $\lambda$ is non-positive.  Instead of looking at all maximal subsets, we are interested in the maximal subset $\overline{M}_{\leq 0}(\lambda)$ which contains a triple $\{m_1,m_2,m_3\}$ satisfying the conditions $(1)-(3)$ and $(1')-(3')$ in Lemma \ref{lem:6}. It is not difficult to compute that there are four such maximal subset. See Table \ref{tab:3} below. 

\begin{table}[ht]\label{tab:3}
  \centering
  \caption{Maximal set $\overline{M}_{\leq 0}(\lambda)$}
  \begin{tabular}{|c|c|c|c|c|c|c|c|}
    \hline
  \multirow{2}{*}{Cases}  &\multirow{2}{*}{$\lambda=(a_0,\ldots,a_5)$} &\multicolumn{3}{c|}{Maximal triples $\{q_1,q_2,q_3\}$}\\
    \cline{3-5}
   ~&~&$q_1$& $q_2$& $q_3$\\
    \hline
    $(N1')$&$(2,1,0,0,-1,-2)$& $x_0x_2,x_1^2$& $x_0x_5,x_1x_4,x_2^2$ & $x_2x_5,x_4^2$\\ \hline
    $(N2')$& $(3,1,1,-1,-1,-3)$ &$x_0x_3,x_1^2$& $x_0x_5,x_1x_3$&  $x_1x_5,x_3^2$\\ \hline
    $(N3')$& $(4,1,1,-2,-2,-2)$& $x_0x_3,x_1^2$ & $x_0x_3, x_1^2$ & $x_3^2$ \\ \hline
     $(N4')$& $(5,3,1,-1,-3,-5)$& $x_0x_4,x_1x_3,x_2^2$ & $x_0x_5,x_1x_4,x_2x_3$ & $x_1x_5,x_2x_4,x_3^2$ \\ \hline
         \end{tabular}
\end{table}

The lemma below gives a geometric description of $X$ of type $(N1')-(N4') $.
\begin{lemma}\label{iso-X-ss}
Let $X$ be a general element of type $(N1')-(N4')$. Then $X$ has an isolated simple elliptic singularity. 
\end{lemma}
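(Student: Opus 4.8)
The plan is to determine the analytic type of the germ $(X,p_0)$ at the distinguished fixed point $p_0=[1,0,0,0,0,0]$ of the destabilizing one-parameter subgroup $\lambda$, exactly as was done in Remark \ref{rem:1} for the degree-$6$ case. First I would check that $p_0$ lies on a general member $X$ of each type and is a singular point: since every leading monomial $m_k$ has $\lambda$-weight at most $2$, no $Q_i$ contains an $x_0^2$ term, so $p_0\in X$; and the dehomogenized differentials $d\tilde Q_i$ at $p_0$ span at most a two-dimensional space (only the equations whose leading monomial is divisible by $x_0$ contribute a linear part), so the Jacobian has rank $<3$ and $p_0$ is singular. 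The four strata $(N1')$--$(N4')$ are precisely the maximal subsets whose leading triple satisfies conditions $(1')$--$(3')$ of Lemma \ref{lem:6}, and these are exactly the conditions preventing the general member from being singular along a curve; isolatedness will then be confirmed by the explicit normal form below. I would then work in the affine chart $x_0=1$ with coordinates $y_i=x_i/x_0$.

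The reduction depends on how many of $Q_1,Q_2,Q_3$ have leading monomial divisible by $x_0$, since precisely those dehomogenize to equations with a nonzero linear part at $p_0$. For types $(N1')$, $(N2')$, $(N4')$ both $m_1$ and $m_2$ are of the form $x_0x_j$, so $\tilde Q_1,\tilde Q_2$ have independent linear parts; by the implicit function theorem I would solve for two of the $y_i$ and substitute into $\tilde Q_3$, obtaining a single power series $f(u,v,w)\in\CC[[u,v,w]]$. Inspecting the Newton boundary of $f$ then matches it, after a coordinate change, to one of the simple elliptic normal forms $\tilde{E}_6,\tilde{E}_7,\tilde{E}_8$ (for instance in $(N4')$ the $2$-jet has corank $2$, leading to an $\tilde{E}_7$ or $\tilde{E}_8$ form). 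For the remaining type $(N3')$ only $m_1$ is divisible by $x_0$, while $m_2=x_1^2$ and $m_3=x_3^2$ carry no linear part; eliminating a single variable leaves the germ $(X,p_0)$ cut out (up to higher order) by two quadrics in $\CC^4$, and for generic coefficients these cut out the affine cone over a smooth elliptic normal quartic curve in $\PP^3$, which is the simple elliptic singularity of degree $4$.

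The main obstacle is the genericity input required at the final step. In each normal form a modular parameter appears — the coefficient $a$ of the $uvw$ term, equivalently the cross-ratio of the associated plane cubic, or the pencil of quadrics cutting out the quartic in $(N3')$ — and the germ is genuinely simple elliptic only when the corresponding elliptic curve is smooth, i.e.\ the modulus is finite; for special values it degenerates to a cusp or worse, which is not simple elliptic. I would therefore need to verify that for a general net of each type this modulus is finite, and that the higher-order terms supplied by genericity provide the pure-power monomial (such as the $w^6$ in the $\tilde{E}_8$ form) that makes the hypersurface germ finitely determined with an isolated singularity. Granting this, the argument is a finite case check over the four strata $(N1')$--$(N4')$, with no further idea needed.
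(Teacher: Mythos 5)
Your overall strategy coincides with the paper's: localize at $p_0=[1,0,\ldots,0]$, use the quadrics whose leading monomials are divisible by $x_0$ to eliminate variables via the implicit function theorem, and match the resulting germ against the normal forms $\tilde{E}_6,\tilde{E}_7,\tilde{E}_8$. For $(N1')$, $(N2')$, $(N4')$ this is exactly what the paper does (obtaining $\tilde{E}_8$, $\tilde{E}_7$, $\tilde{E}_8$ respectively), and the genericity issue you flag at the end --- that the modulus must be finite and the pure-power monomials must actually appear --- is precisely what the hypothesis ``general'' absorbs; the paper deals with it by writing the local equation with generic coefficients $\alpha_i$.

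The case $(N3')$, however, is set up incorrectly. You read the table entry ``$x_0x_3,\ x_1^2$'' for $q_2$ as saying $m_2=x_1^2$, so that $Q_2$ has no linear part at $p_0$ and only one variable can be eliminated. But the listed monomials are only the maximal allowed ones; for $\lambda=(4,1,1,-2,-2,-2)$ the monomials $x_0x_4$ and $x_0x_5$ also have weight $2$ and are permitted in $Q_2$. In fact condition (2) of Lemma \ref{lem:6} forces $m_2\geq_\lambda x_0x_5$ on the strata under consideration: if $Q_2$ contained no $x_0$-divisible term, then $Q_2$ and $Q_3$ would both be singular at $p_0$ and $X$ would have a point of multiplicity $4$ there --- exactly the degenerate configuration that Lemma \ref{lem:6} has already excluded from $\overline{M}_{\leq 0}(\lambda)$. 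Hence a general net of type $(N3')$ has $Q_1=x_0l_1(x_3,x_4,x_5)+\cdots$ and $Q_2=x_0l_2(x_3,x_4,x_5)+\cdots$ with $l_1,l_2$ independent, two variables can be eliminated, and the germ is the corank-two hypersurface singularity $u^2+F_4(v,w)+(\text{higher order})=0$ with $F_4$ a generic binary quartic, i.e.\ $\tilde{E}_7$ --- not the codimension-two cone over an elliptic normal quartic. The sublocus you actually analyze is a proper degeneration of $(N3')$, and the degree-$4$ simple elliptic singularity you obtain there is not among the (hypersurface) types the paper's terminology admits. The statement of the lemma is unaffected, but your case $(N3')$ must be redone along the same lines as the other three.
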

\begin{proof}
Obviously, $X$ is singular at $p_0=[1,0,0,0,0,0]$. Moreover,  $p_0$ is an isolated hypersurface  singularity when $X$ is general. To show it is simple elliptic, let us compute the analytic type of  $p_0$  case by case.

If $X$ is a general element of type $(N1')$, then the equations of $Q_i$ can be written as 
\begin{equation*}
\begin{aligned}
Q_1: &~x_0x_2+q(x_1,\ldots, x_5)=0 \\
Q_2:  &~x_0x_5+ x_1x_4+q'(x_2,x_3,x_4,x_5)=0\\
Q_3: &~x_4^2+x_5 l(x_2,x_3,x_4,x_5)=0
\end{aligned}
\end{equation*}
up to a linear change of the coordinates. Let us take the local coordinates near $p_0$:
\begin{equation}\label{eq:4.2}y_i=x_i/x_0.
\end{equation}  From the first two quadratic equations, one can get 
\begin{equation*}
\begin{aligned}
y_2&=f_1(y_1,y_3,y_4), \\
 y_5&=y_1y_4+ by^2_3+b'y_3f_1(y_1,y_2,y_4)+f_2(y_1,y_3,y_4),
\end{aligned}
\end{equation*}
for some formal power series $f_1\in \CC[[y_1,y_3,y_4]]_{\geq 2} , f_2\in \CC[[y_1,y_3,y_4]]_{\geq 4}$ and some constant $b,b'\in\CC$. Therefore, the local equation of $p_0$ is 
\begin{equation} 
y_4^2+ \alpha_1 y_3^3+\alpha_2 y_3^2y_1^2+\alpha_3 y_3y_1^4+\alpha_4 y_1^6 +(\geq \text{higher order terms})=0,
\end{equation} 
for some complex number $\alpha_i$. According to $\S$3.1, the singularity $p_0$ is simple elliptic of type $\tilde{E}_8$.

If $X$ is a general element of type $(N2')$,  we write the equations as 
\begin{equation*}
\begin{aligned}
Q_1: &~x_0x_3+q(x_1,\ldots,x_5)=0 \\
Q_2:  &~x_0x_5+ x_1x_3+x_2x_4=0\\
Q_3: &~q'(x_3,x_4,x_5)+x_5l(x_1,x_2)=0
\end{aligned}
\end{equation*}
Still, we  take the affine coordinate (\ref{eq:4.2}) near $p_0$ and then  we have 
\begin{equation*}
y_3=f(y_1,y_2,y_4), ~~ y_5=-y_1f(y_1,y_2,y_4)-y_2y_4,
\end{equation*}
for some $f\in \CC[[y_1,y_2,y_4]]_{\geq 2}$.
Thus the local equation around $p_0$ is
\begin{equation} 
\alpha y_4^2+ g(y_1,y_4) +y_4 g'(y_1,y_2,y_4)=0.
\end{equation} 
where $g\in \CC[[y_1,y_2]]_{\geq 4},$ $g'\in \CC[[y_1,y_2,y_4]]_{\geq 2}$ and $\alpha\in\CC$ is a constant. Hence $p_0$ is simple elliptic of type $\tilde{E}_7$ by $\S$3.1.

One can similarly prove that $X$ has a simple elliptic singularity $p_0$ of type $\tilde{E_7}$ when it is general of type $(N3')$,  and of type $\tilde{E}_8$ when it is general of type $(N4')$.
\end{proof}

\subsection{Image of the period map} Let $\cU_8\subset Gr(3,W)$ be the open subset consisting of all complete intersections with at worst simple singularities. By Lemma \ref{iso-X-ss},  we know that $\cU_8$ is contained in the stable locus of $ Gr(3,W)$. This proves Theorem \ref{thm:ade-ss}. Moreover,  similarly as Theorem \ref{thm2.11}, we can get the following result

\begin{theorem}\label{thm3.4}
Let $\cM_8=\cU_8/\!\!/ SL_6(\CC)$ be the moduli space of the complete intersection of three quadrics in $\PP^5$ with simplest simple singularities. Then
\begin{enumerate}
    \item[(i)] the boundary of $\cM_8$ in $\overline{\cM}_8$ has codimension $\geq 2$. 
    \item[(ii)] there is an open immersion $\cP_8:\cM_8\rightarrow \cF_8$ as the extended period map and  the complement of $\cP_8(\cM_8)$ in $\cF_8$ is the union of three NL-divisors $D^8_{1,1}, D^8_{2,1}$ and $D^8_{3,1}$. The Picard group $\Pic_\QQ(\cF_8)$ is spanned by $\{D^8_{d,1},~1\leq d\leq4\}$. 
\end{enumerate}
\end{theorem}
\begin{proof}
For (i),  let $\Delta\subseteq Gr(3,W)$ be the discriminant divisor which parameterizes singular  complete intersections.  Then $\Delta$ is $SL_6(\CC)$-invariant and irreducible (cf.~\cite{GLT15}). Consider the GIT quotient $\Delta/\!\!/ SL_6(\CC)$.  By Theorem \ref{thm:ade-ss}, the general members in $\Delta$ is stable,  so the boundary $\overline{\cM}_8\backslash \cM_8$ lies in the boundary of $\Delta/\!\!/ SL_6(\CC)$ as a proper closed subset. It follows that $\overline{\cM}_8\backslash \cM_8$ has codimension two in $\overline{\cM}_8$.

For (ii), this follows from the same argument as in Theorem \ref{thm2.11}.
\end{proof}
\end{section}

\section{Arithmetic compactification of locally Hermitian symmetric varieties}  Baily and Borel compactify the arithmetic quotient $\Gamma_{2\ell}\backslash\DD$ to a normal projective variety $\overline{\Gamma_{2\ell}\backslash\DD}^{bb}$ by adding finitely many modular curves and singletons, which correspond to the classes of $\QQ$-isotropic subspaces of $\Lambda_{2\ell}^\CC$ of dimension $2$ and $1$. In \cite{Lo03}, Looijgenga gives an arithmetic compactification of the completment of hyperplane arrangements in $\Gamma_{2\ell}\backslash\DD$ in the spirit of Satake-Baily-Borel theory.  In our situation, the hyperplane arrangement we are interested in will be the union of three NL-divisors $D^{2\ell}_{d,1}$ for $d=1,2,3$ 

\subsection{A review of Looijenga's work}
Let $\fE$ be a collection of elements in $\Lambda$. The orthogonal complement of  $ \beta\in \fE$ and $h_{2\ell}$ in $\Lambda$ defines a hyperplane  $H_{\beta}\subseteq \PP(\Lambda_{2\ell}^\CC)$. Set $\DD_{H_\beta}=\DD\cap H_{\beta}$ to be the hyperplane arrangement and 
define $$\DD^\circ_{\fE}=\DD-\bigcup\limits_{\beta\in \fE} \DD_{H_\beta}$$
to  be the complement of all subdomains obtained from $\fE$. The quotient $\Gamma_{2\ell}\backslash \DD^\circ_{\fE}$ is the complement of Heenger divisors.

Looijenga constructed the compactification $\widehat{\Gamma_{2\ell}\backslash\DD^\circ_{\fE}}$ from the strata of decomposition of rational cones. It can be also viewed as the natural blowdown of certain minimal normal blowup of the Baily-Borel compactification $\overline{\Gamma_{2\ell}\backslash\DD}^{bb}$. The structure of the birational map is explicitly provided that how the hyperplanes $H_\beta$ intersect inside the period domain $\DD$. To make it precise, we fix our temporary notation as follows: 
\begin{itemize}
\item ${\rm PO}(\fE)$: the collection of subspaces $M\subseteq \Lambda_{2\ell}^\CC$ which are intersection of the hyperplane arrangements from $\fE$. Denote by  $$\pi_M: \PP(\Lambda_{2\ell}^\CC)-\PP(M)\longrightarrow \PP(\Lambda^\CC_{2\ell}/M)$$  the natural projection.  The projection also defines a natural subdomain $\pi_M\DD^\circ_\fE\subseteq \DD^\circ_\fE$ (cf.~\cite{Lo03} $\S$7). 
	
\item ${\rm I}(\fE)$: the collection of the common intersection of $I^\perp$ and hyperplane arrangements from $\fE$ containing $I$, where $I$ is a $\QQ$-isotropic subspace of $\Lambda_{2\ell}^\CC$.
\end{itemize}

The compactification $\widehat{\Gamma_{2\ell}\backslash\DD^\circ_{\fE}}$ can be interpreted as below: 
we define 
\begin{equation}
\widehat{\DD}_\fE=\DD^\circ_\fE\cup \coprod\limits_{M\in {\rm PO}(\fE)} \pi_M\DD^\circ_{\fE} \cup \coprod\limits_{V\in {\rm I}(\fE)} \pi_{V}\DD^\circ_{\fE}
\end{equation}
then the compactification  $\widehat{\Gamma_{2\ell}\backslash\DD^\circ_{\fE}}$ is isomorphic to the quotient $\Gamma_{2\ell}\backslash\widehat{\DD}$, and boundary thus decomposes into finitely many strata. A consequent of this description is that if the $r$-th self intersection of hyperplane arrangements $(\Gamma_{2\ell}\backslash \bigcup_\beta \DD_{H_\beta})^{(r)}\neq \emptyset$, then  \begin{equation}\label{dim}
\dim (\widehat{\Gamma_{2\ell}\backslash\DD^\circ_{\fE}} - \Gamma_{2\ell}\backslash\DD^\circ_{\fE})\geq r+1.
\end{equation}

When the codimesion of $\widehat{\Gamma_{2\ell}\backslash\DD^\circ_{\fE}}-\Gamma_{2\ell}\backslash\DD^\circ_{\fE}$ is greater than one, there is an explicit description of  
$ \widehat{\Gamma_{2\ell}\backslash\DD^\circ_{\fE}} $ in terms of the algebra of automorphic forms:  let $\LL$ be  natural automorphic line bundle on $\DD$ and $\LL^\circ$ the restriction of $\LL$ to $\DD^\circ_{\fE}$, then 
\begin{equation}\label{Locom}
\widehat{\Gamma_{2\ell}\backslash\DD^\circ_\fE}\cong \Proj\bigoplus\limits_{k\in\ZZ}H^0(\DD^\circ_\fE, (\LL^\circ)^{\otimes k})^{\Gamma_{2\ell}},
\end{equation}
by Corollary 7.5 in \cite{Lo03}.

\subsection{Application to moduli problem via GIT}Let us discuss the possible geometric interpretation of $\widehat{\Gamma_{2\ell}\backslash\DD^\circ_{2\ell}}$.  For many known geometric examples, such as Enrique surface, K3 surfaces of degree 2 and cubic fourfolds, the natural GIT compactification is precisely  Looijenga's compactification (cf.~\cite{Lo03,La10}). 
So it is interesting to investigate the relations between the compactifiations from GIT and arithmetic for K3 surfaces. For K3 surface with Mukai models, Laza has first found that the two compactifications do not necessarily coincide.  This fails for quartic surfaces in $\PP^3$.  
We can show that this actually happens quite often.

In our case,  let $\fE_{2\ell}$ be the collection of elements $\beta\in \Lambda $ satisfying $\beta^2=0$ and $\beta\cdot h_{2\ell}=1,2$ or $3$. Then $\Gamma_{2\ell}\backslash\DD_{\fE_{2\ell}}^\circ$ is the completement of three NL-diviosrs $D^{2\ell}_{d,1}$ for  $d=1,2,3$. The following lemma gives a rough description of the dimension of the boundary strata of $\widehat{\Gamma_{2\ell}\backslash \DD^\circ_{\fE_{2\ell}}}$:
\begin{lemma}\label{boundaryofk3}
When $2\ell=6$ and $8$,	the boundary of $\widehat{\Gamma_{2\ell}\backslash\DD^\circ_{\fE_{2\ell}}}-\Gamma_{2\ell}\backslash\DD^\circ_{\fE_{2\ell}}$ has codimension $1$.
\end{lemma}

\begin{proof} To understand the dimension of boundary, it suffices to consider the intersection of hyperplane arrangements from $\fE_{2\ell}$. Let $M$ be an even lattice of signature $(1,17)$ spanned by $h_{2\ell}$ and elements $e_1,e_2,\ldots, e_{17}$ satisfying that  $e_i^2=0,e_ie_j=1$ and $h_{2\ell}e_i=3$ for $i\neq j$.  It is easy to check this lattice has signature $(1,17)$ and thus can be embedded into $\Lambda$. Then the lattice $M$ can represent the intersection of  $17$ hyperplane arrangements from $\fE_{2\ell}$.  This proves the assertion by \eqref{dim}. 
\end{proof}

\begin{corollary}
For $2\ell=6$ and $8$, the GIT quoitent $\overline{\cM}_{2\ell}$ is not isomorphic to Looijenga's compactification. 
\end{corollary}
\begin{proof}Since  $\cM_{2\ell}$ is isomorphic to $\Gamma_{2\ell}\backslash \DD_{\fE_{2\ell}}^\circ$,  
this is obtained by comparing the dimension of the  boundary of $\overline{\cM}_{2\ell}-\cM_{2\ell}$ and $\widehat{\Gamma_{2\ell}\backslash\DD^\circ_{\fE_{2\ell}}}-\Gamma_{2\ell}\backslash\DD^\circ_{\fE_{2\ell}} $.
\end{proof}
In general, we believe that natural GIT compactifications of K3 surfaces with Mukai models constructed in \cite{GLT15} will not be the same as Looijenga's compactification. This can be achieved by a similar method. 

Another interesting problem is to study the birational maps between  $\widehat{\Gamma_{2\ell}\backslash\DD^\circ_{\fE_{2\ell}}}$ and $\cM_{2\ell}$. 
In a sequel to this paper, the authors together with Greer and Laza will study the birational geometry of $\cF_6$ via the variation of GIT and Looijenga's arithmetic approach. 
\bibliographystyle {plain}
\bibliography{MLK}

\end{document}